\newtheorem{theorem}{Theorem}[section]
\newtheorem{lemma}{Lemma}[section]
\newtheorem{corollary}{Corollary}[section]
\newtheorem{remark}{Remark}[section]
\newtheorem{definition}{Definition}[section]
\newtheorem{example}{Example}[section]
\newtheorem{problem}{Problem}[section]
\def\G1{G^\mathcal{C}}
\begin{document}
\title{Permutation polynomials of the form $x+\gamma \mathrm{Tr}(H(x))$}
\author{
Yangcheng Li$^{1,}$\footnote{Corresponding author. E-mail\,$:$ liyc@m.scnu.edu.cn.} \,\, Xuan Pang$^{1,}$\footnote{E-mail\,$:$ pangxuan202503@163.com.} \,\, Pingzhi Yuan$^{1,}$\footnote{E-mail\,$:$ yuanpz@scnu.edu.cn. Supported by the National Natural Science Foundation of China (Grant No. 12171163) and Guangdong Basic and Applied Basic Research Foundation (Grant No. 2024A1515010589).} \,\, Yuanpeng Zeng$^{1,}$\footnote{E-mail\,$:$ zengyp2025@163.com.}   \\
{\small\it  $^{1}$School of Mathematical Sciences, South China Normal University,}\\
{\small\it Guangzhou 510631, Guangdong, P. R. China} \\
}

\date{}
\maketitle
\date{}

\noindent{\bf Abstract}\quad
Given a polynomial \( H(x) \) over \(\mathbb{F}_{q^n}\), we study permutation polynomials of the form \( x + \gamma \mathrm{Tr}(H(x)) \) over \(\mathbb{F}_{q^n}\). Let
\[P_H=\{\gamma\in \mathbb{F}_{q^n} : x+\gamma \mathrm{Tr}(H(x))~\text{is a permutation polynomial}\}.\]
We present some properties of the set \(P_H\), particularly its relationship with linear translators. Moreover, we obtain an effective upper bound for the cardinality of the set \(P_H\) and show that the upper bound can reach up to $q^n - q^{n - 1}$. Furthermore, we prove that when the cardinality of the set \(P_H\) reaches this upper bound, the function \(\mathrm{Tr}(H(x))\) must be an \(\mathbb{F}_q\)-linear function. Finally, we study two classes of functions $H(x)$ over \(\mathbb{F}_{q^2}\) and determine the corresponding sets $P_H$. The sizes of these sets $P_H$ are all relatively small, even only including the trivial case.

\medskip \noindent{\bf  Keywords} finite fields; permutation polynomials; linear translators; trace function.

\medskip
\noindent{\bf MR(2020) Subject Classification} 11T06, 11T55.

\section{Introduction}
A finite field, denoted as $\mathbb{F}_{q^n}$, where $q$ is a prime power and $n$ is a positive integer, is a field that contains a finite number of elements. Denote $\mathbb{F}_{q^n}[x]$ as the ring of polynomials in a single indeterminate $x$ over $\mathbb{F}_{q^n}$. A polynomial $f(x)$ over a finite field $\mathbb{F}_{q^n}$ is called a {\it permutation polynomial} if the mapping $f : x\mapsto f(x)$ from $\mathbb{F}_{q^n}$ to $\mathbb{F}_{q^n}$ is a bijection. In other words, for every $y\in\mathbb{F}_{q^n}$, there exists a unique $x\in\mathbb{F}_{q^n}$ such that $f(x)=y$.

Permutation polynomials over finite fields have attracted significant attention due to their wide-ranging applications, especially in coding theory \cite{Ding,Ding-Zhou,Laigle-Chapuy}, cryptography \cite{Rivest-Shamir-Adelman,Schwenk-Huber}, combinatorial design theory \cite{Ding-Yuan}, and other areas of mathematics and engineering \cite{Lidl-Niederreiter2,Mullen}. Constructing a class of permutation polynomials with a simple form or determining whether a class of polynomials is a permutation polynomial is an interesting and challenging problem, and there are a great many results in this regard \cite{Li-Qu-Wang,Tuxanidy-Wang,Wu-Yuan,Yuan1,Yuan2,Yuan-Ding1,Yuan-Ding2,Yuan-Zeng}.

In recent years, polynomials of the form $x+\gamma\text{Tr}(H(x))$ over $\mathbb{F}_{q^n}$ have emerged as an important class of polynomials to study. Here, $\gamma\in\mathbb{F}_{q^n}$, $\mathrm{Tr}(\cdot)$ represents the trace function which maps an element from $\mathbb{F}_{q^n}$ to $\mathbb{F}_{q}$, and $H(x)$ is a polynomial over $\mathbb{F}_{q^n}$. The definition of $\mathrm{Tr}(\cdot)$ is as follows: for an element $x\in\mathbb{F}_{q^n}$,
\[\mathrm{Tr}(x)=x + x^q+\cdots+x^{q^{n-1}}.\]
The study of when such polynomials are permutation polynomials has both theoretical and practical significance. The main motivation for studying the polynomial \(x+\gamma\mathrm{Tr}(H(x))\) is to consider whether a permutation polynomial remains a permutation polynomial after a slight modification. For example, if both \(F(x)\) and \(F(x) + x\) are permutation polynomials, then \(F(x)\) is called a {\it complete mapping polynomial}. For results related to complete mapping polynomials we can refer to \cite{Laigle-Chapuy,Niederreiter-Robinson}. Given a polynomial \(H(x)\), determining for which \(\gamma\in\mathbb{F}_{q^n}\) the polynomial \(x+\gamma\text{Tr}(H(x))\) can be a permutation polynomial is a challenging problem.

In 2009, Charpin and Kyureghyan \cite{Charpin-Kyureghyan1} considered polynomials of the form
\begin{equation}
G(x)+\gamma\mathrm{Tr}(H(x))
\end{equation}
and introduced an effective method to construct many such permutations. In this paper, they also proposed an open problem: Characterize a class of permutation polynomials of type (1), where $G(X)$ is neither a permutation nor linearized polynomial. In 2025, Yuan \cite{Yuan3} proposed a new algebraic structure of permutation polynomials over \(\mathbb{F}_{q^n}\) and gave an answer to the above open problem.

In 2010, Charpin and Kyureghyan \cite{Charpin-Kyureghyan2} further studied some properties of the polynomial \(x^s+\gamma\text{Tr}(x^t)\) over \(\mathbb{F}_{2^n}\) (also see \cite{Charpin-Kyureghyan-Suder}). In 2016, Kyureghyan and Zieve \cite{Kyureghyan-Zieve} searched all permutation polynomials over fields $\mathbb{F}_{q^n}$ of the form $X+\gamma\mathrm{Tr}(X^k)$ with $\gamma\in\mathbb{F}_{q^n}$, $q$ odd, $n > 1$, and $q^n < 5000$. In 2018, Li et al. \cite{Li-Qu-Chen-Li} presented fifteen new classes of permutation polynomials of the form $cx + \mathrm{Tr}_{q^l/q}(x^a)$ over finite fields with even characteristic. In 2019, Zha, Hu and Zhang \cite{Zha-Hu-Zhang} continued the work of \cite{Kyureghyan-Zieve} and studied permutation polynomials of the form \(x + \gamma\mathrm{Tr}_{q^n/q}(h(x))\) over \(\mathbb{F}_{q^n}\) for some values of \(p\), \(k\) and \(n\). In 2025, Jiang et al. \cite{Jiang-Yuan-Li-Qu} studied permutation polynomials of the form \(x + \gamma\mathrm{Tr}_{q^2/q}(h(x))\) over finite fields with even characteristic, where \(h(x)\) represents certain specific polynomials. For more results, we refer to \cite{Wang-Zha-Du-Zheng}.

In 1992, Evans, Greene and Niederreiter \cite{Evans-Greene-Niederreiter} studied a related problem. They considered the polynomial \(f\in \mathbb{F}_{q}[x]\) with \(\deg(f) < q\). When \(f(x)+cx\) is a permutation polynomial for at least \([q/2]\) values of \(c\in \mathbb{F}_{q}\), they proved several properties. For example, \(f(x)+cx\) is a permutation polynomial for at least \(q-\frac{q - 1}{p - 1}\) values of \(c\in \mathbb{F}_{q}\), and \(f(x)\) can be expressed as \(f(x)=ax + g(x^{p})\), where \(a\in F_{q}\) and \(g\in \mathbb{F}_{q}[x]\).

Let \(f\in\mathbb{F}_q[x]\). Define
\[D_f := \left\{ \frac{f(y) - f(x)}{y - x} \,\middle|\, x, y \in \mathbb{F}_q, x\neq y \right\}\]
as the set of directions determined by the function \(f\) and $N=|D_f|$. In 1970, R\'edei \cite{Redei} proved that either \( N = 1 \), \( N \geq (q + 3)/2 \) or \( (q + s)/(s + 1) \leq N \leq (q - 1)/(s - 1) \) for some \( s = p^e \leq \sqrt{q} \). In \cite{Blokhuis-Brouwer-Szonyi,Blokhuis-Ball-Brouwer-Storme-Szonyi}, some results were improved. In 2003, Ball \cite{Ball} covered the unresolved cases, and showed that either \( N = 1 \), \( N \geq (q + 3)/2 \) or \( q/s + 1 \leq N \leq (q - 1)/(s - 1) \) for some \( s \) where \( \mathbb{F}_s \) is a subfield of \( \mathbb{F}_q\). Moreover, Ball \cite{Ball} proved that if \(|D_f| \leq \frac{q + 1}{2}\), then \(f\) is a linearized polynomial. The bounds given in \cite{Ball} can all be attained, and we refer to the results in \cite{Gacs-Lovasz-Szonyi,Lovasz-Schrijver,Polverino-Szonyi-Weiner,Redei}.

In this paper, we investigate for a given polynomial $H(x)$ over $\mathbb{F}_{q^n}$, what values of $\gamma$ will make $x + \gamma\mathrm{Tr}(H(x))$ a permutation polynomial over $\mathbb{F}_{q^n}$. In other words, we consider the following set
\[P_H=\{\gamma\in \mathbb{F}_{q^n} : x+\gamma \mathrm{Tr}(H(x))~\text{is a permutation polynomial}\}.\]
First, we give an equivalent characterization of the set $P_H$ and present some properties of the set $P_H$, particularly its relationship with linear translators. Then, we provide an effective upper bound for the cardinality of the set $P_H$ and show that the upper bound can reach up to $q^n - q^{n - 1}$. Furthermore, we prove that when the cardinality of the set \(P_H\) reaches this upper bound, the function \(\mathrm{Tr}(H(x))\) must be an \(\mathbb{F}_q\)-linear function. Finally, we study two classes of functions $H(x)$ over \(\mathbb{F}_{q^2}\) and determine the corresponding sets $P_H$. The sizes of these sets $P_H$ are all relatively small, even only including the trivial case.

\section{Some properties of the set \( P_H \)  }
We first give an equivalent characterization of the set \(P_H\). Let
\[I_b=\{x\in \mathbb{F}_{q^n} : \mathrm{Tr}(H(x))=b\}\]
and
\[S=\mathrm{Tr}(H(\mathbb{F}_{q^n})).\]
We have

\begin{theorem} The set $P_H=\mathbb{F}_{q^n} \setminus \bigcup_{\substack{b,c \in S\\b\neq c}}\left\{\frac{y-x}{b-c} : x\in I_b,y\in I_c\right\}.$
\end{theorem}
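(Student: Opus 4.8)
The plan is to characterize when $f_\gamma(x) = x + \gamma\,\mathrm{Tr}(H(x))$ fails to be a permutation polynomial and then take the complement. Since $f_\gamma$ maps the finite set $\mathbb{F}_{q^n}$ to itself, it is a permutation if and only if it is injective. So I would start by writing down what injectivity means: $f_\gamma$ is \emph{not} a permutation precisely when there exist distinct $x, y \in \mathbb{F}_{q^n}$ with $f_\gamma(x) = f_\gamma(y)$, that is,
\[
x + \gamma\,\mathrm{Tr}(H(x)) = y + \gamma\,\mathrm{Tr}(H(y)),
\]
which rearranges to $y - x = \gamma\bigl(\mathrm{Tr}(H(x)) - \mathrm{Tr}(H(y))\bigr)$.

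Next I would split into two cases according to whether $\mathrm{Tr}(H(x)) = \mathrm{Tr}(H(y))$. If $\mathrm{Tr}(H(x)) = \mathrm{Tr}(H(y))$, then the displayed equation forces $y - x = 0$, contradicting $x \neq y$; so a genuine collision requires $\mathrm{Tr}(H(x)) \neq \mathrm{Tr}(H(y))$. Set $b = \mathrm{Tr}(H(x))$ and $c = \mathrm{Tr}(H(y))$; then $b, c \in S$ by definition of $S$, $b \neq c$, $x \in I_b$, $y \in I_c$, and $\gamma = \dfrac{y - x}{b - c}$. Conversely, given any $b, c \in S$ with $b \neq c$ and any $x \in I_b$, $y \in I_c$, the element $\gamma = \dfrac{y-x}{b-c}$ satisfies $f_\gamma(x) = f_\gamma(y)$; moreover $x \neq y$ automatically, since $x \in I_b$ and $y \in I_c$ with $b \neq c$ would be contradicted by $x = y$. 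Hence $f_\gamma$ is not a permutation polynomial for exactly those $\gamma$ lying in $\bigcup_{b \neq c,\ b,c \in S}\bigl\{\frac{y-x}{b-c} : x \in I_b, y \in I_c\bigr\}$, and taking complements in $\mathbb{F}_{q^n}$ gives the claimed formula for $P_H$.

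I do not anticipate a serious obstacle here; the statement is essentially an unwinding of the definition of injectivity, and the only subtlety is handling the two cases ($\gamma = 0$ versus $\gamma \neq 0$ implicitly, and the $b = c$ case) cleanly so that the collision set is described exactly — in particular checking both inclusions and verifying that $x \neq y$ is forced whenever $b \neq c$, so that no spurious elements are introduced. One may also note as a remark that $0 \in P_H$ always (since $x$ itself is a permutation), consistent with the fact that $\frac{y-x}{b-c} = 0$ would require $x = y$, which cannot happen in the union.
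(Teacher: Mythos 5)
Your proposal is correct and follows essentially the same route as the paper's proof: unwind injectivity of $x+\gamma\,\mathrm{Tr}(H(x))$, observe that a collision between $x\in I_b$ and $y\in I_b$ is impossible, and identify the obstructing values of $\gamma$ as exactly the quotients $\frac{y-x}{b-c}$ with $b\neq c$. Your write-up is if anything slightly more careful about the converse inclusion than the paper's.
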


\begin{proof}
The polynomial \(x + \gamma \mathrm{Tr}(H(x))\) is a permutation polynomial if and only if, for \(x\neq y\), \(x + \gamma \mathrm{Tr}(H(x))\neq y + \gamma \mathrm{Tr}(H(y))\). If \( x \in I_b \) and \( y \in I_b \), then \( \mathrm{Tr}(H(x)) = \mathrm{Tr}(H(x)) \). Therefore, \( x + \gamma \mathrm{Tr}(H(x)) \neq y + \gamma \mathrm{Tr}(H(y)) \). When considering \(x\in I_b\), \(y\in I_c\) with \(b\neq c\), this condition is equivalent to \(x+\gamma b\neq y+\gamma c\). Rearranging this inequality leads to \(\gamma\neq\frac{y - x}{b - c}\) for all \(x\in I_b\), \(y\in I_c\) and \(b\neq c\). By the definition of the set \(P_H\), this is equivalent to \(\gamma\in P_H\). Thus, \(x + \gamma \mathrm{Tr}(H(x))\) is a permutation polynomial if and only if \(\gamma\in P_H\).
\qed\end{proof}

\begin{remark} Let \( \hat{P}_H =\bigcup_{\substack{b,c \in S\\b\neq c}}\left\{\frac{y-x}{b-c} : x\in I_b,y\in I_c\right\} \), then \( P_H \cup \hat{P}_H = \mathbb{F}_{q^n} \), and \( 0 \notin \hat{P}_H\). It is easy to see that \( |P_H| + |\hat{P}_H| = |\mathbb{F}_{q^n}| = q^n \). Let \( D_f \) be the direction set determined by the function \( f \). The set \( \hat{P}_H \) is closely related to \( D_{\mathrm{Tr}(H(x))} \). Let \( x \in I_b \) and \( y \in I_c \) with $b\neq c$. Suppose \( d = \frac{Tr(H(x)) - Tr(H(y))}{x - y}= \frac{b - c}{x - y} \) and \( d \neq 0 \). Then we have \( -\frac{1}{d} = \frac{y - x}{b - c} \in \hat{P}_H \). This deduces that \( |D_{\mathrm{Tr}(H(x))}| - 1 = |\hat{P}_H| \). Therefore, we get \( |D_{\mathrm{Tr}(H(x))}| + |P_H| = |\mathbb{F}_{q^n}| + 1 = q^n + 1 \).
\end{remark}

\begin{theorem}
$P_H=\mathbb{F}_{q^n}$ if and only if $\mathrm{Tr}(H(x))$ is a constant map.
\end{theorem}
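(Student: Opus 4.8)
The plan is to read off both implications from the equivalent description of $P_H$ given in Theorem 2.1 (equivalently, from the formula for $\hat P_H$ in Remark 2.1), the point being that $\hat P_H$ is empty precisely when the image set $S=\mathrm{Tr}(H(\mathbb{F}_{q^n}))$ has at most one element.

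For the ``if'' direction, suppose $\mathrm{Tr}(H(x))$ is constant. Then $S$ is a singleton, so there are no pairs $b,c\in S$ with $b\neq c$, the union defining $\hat P_H$ is empty, and hence $P_H=\mathbb{F}_{q^n}\setminus\hat P_H=\mathbb{F}_{q^n}$. One can also see this directly: if $\mathrm{Tr}(H(x))\equiv b$, then $x+\gamma\,\mathrm{Tr}(H(x))=x+\gamma b$ is a translation of $\mathbb{F}_{q^n}$ for every $\gamma$, hence a permutation polynomial.

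For the ``only if'' direction I would prove the contrapositive by exhibiting an explicit $\gamma\notin P_H$. If $\mathrm{Tr}(H(x))$ is not constant, choose $u,v\in\mathbb{F}_{q^n}$ with $b:=\mathrm{Tr}(H(u))\neq \mathrm{Tr}(H(v))=:c$; since the trace values differ we must have $u\neq v$, so $\gamma:=\frac{v-u}{b-c}$ is a well-defined nonzero element. A direct computation gives $\big(u+\gamma\,\mathrm{Tr}(H(u))\big)-\big(v+\gamma\,\mathrm{Tr}(H(v))\big)=(u-v)+\gamma(b-c)=0$, so $x\mapsto x+\gamma\,\mathrm{Tr}(H(x))$ is not injective and $\gamma\notin P_H$; equivalently $\gamma\in\{\tfrac{y-x}{b-c}:x\in I_b,\,y\in I_c\}\subseteq\hat P_H$, which is nonempty because $b,c\in S$ force $I_b,I_c\neq\emptyset$. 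Hence $P_H\neq\mathbb{F}_{q^n}$.

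There is essentially no obstacle here: the statement is an immediate corollary of Theorem 2.1. The only mild point worth checking is that the witnessing direction $\gamma$ is nonzero, consistent with the fact that $0$ always lies in $P_H$; this holds because distinct trace values of $H$ at $u$ and $v$ force $u\neq v$.
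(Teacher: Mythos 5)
Your proposal is correct and follows essentially the same route as the paper: both directions are read off from the characterization of $P_H$ in Theorem 2.1, with the constant case forcing $\hat P_H=\emptyset$ and the non-constant case producing a nonempty union (your explicit witness $\gamma=\frac{v-u}{b-c}$ just makes the paper's ``$S$ must be a singleton'' step concrete).
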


\begin{proof}
Suppose \(\mathrm{Tr}(H(x))\) is a constant map. Let \(\mathrm{Tr}(H(x)) = k\) for all \(x\in\mathbb{F}_{q^n}\). Then the polynomial \(x+\gamma\mathrm{Tr}(H(x))\) simplifies to \(x+\gamma k\). The polynomial \(x + \gamma k\) is a permutation polynomial for all \(\gamma\in\mathbb{F}_{q^n}\). Therefore, \(P_H=\mathbb{F}_{q^n}\).

Assume \(P_H=\mathbb{F}_{q^n}\). According to Theorem 2.1, the set \(\hat{P}_H=\bigcup_{\substack{b,c \in S\\b\neq c}}\left\{\frac{y - x}{b - c}:x\in I_b,y\in I_c\right\}\) must be the empty set. The only way this can happen is when the set \(S\), which is the range of \(\mathrm{Tr}(H(x))\) in \(\mathbb{F}_{q^n}\), has only one element. In other words, \(\mathrm{Tr}(H(x))\) takes a single value for all \(x\in\mathbb{F}_{q^n}\), so \(\mathrm{Tr}(H(x))\) is a constant map.
\qed\end{proof}

According to Theorem 2.2, when \(\mathrm{Tr}(H(x))\) is a constant map, it corresponds to a trivial case. Therefore, in the following, we only consider the case where \(\mathrm{Tr}(H(x))\) is not a constant map.

If we impose a certain restriction on the polynomial \(H(x)\), we can also obtain the following theorem.

\begin{theorem}
If $H(x)\in\mathbb{F}_{p^t}[x]$, then $\alpha\notin P_H$ if and only if $\alpha^{p^t}\notin P_H$.
\end{theorem}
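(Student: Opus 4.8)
The plan is to exploit that the $p^t$\nobreakdash-power Frobenius map commutes with both $H$ and the trace. First I would record two identities, under the (implicit) assumption that $\mathbb{F}_{p^t}$ is a subfield of $\mathbb{F}_{q^n}$. Writing $H(x)=\sum_i a_i x^i$ with all $a_i\in\mathbb{F}_{p^t}$, we have $a_i^{p^t}=a_i$, and since raising to the $p^t$\nobreakdash-th power is additive in characteristic $p$, this gives $H(x)^{p^t}=\sum_i a_i x^{ip^t}=H(x^{p^t})$. Likewise, because $z\mapsto z^{p^t}$ is additive and commutes with $z\mapsto z^q$, the formula $\mathrm{Tr}(z)=z+z^q+\cdots+z^{q^{n-1}}$ yields $\mathrm{Tr}(z)^{p^t}=\mathrm{Tr}(z^{p^t})$ for every $z\in\mathbb{F}_{q^n}$. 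Combining the two, $\mathrm{Tr}(H(x))^{p^t}=\mathrm{Tr}(H(x^{p^t}))$ for all $x\in\mathbb{F}_{q^n}$, and the map $\phi\colon x\mapsto x^{p^t}$ is a bijection of $\mathbb{F}_{q^n}$.

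Next I would prove the implication $\alpha\notin P_H\Rightarrow\alpha^{p^t}\notin P_H$. If $\alpha\notin P_H$, then $x+\alpha\mathrm{Tr}(H(x))$ is not a permutation polynomial, so there exist $x\neq y$ in $\mathbb{F}_{q^n}$ with $x+\alpha\mathrm{Tr}(H(x))=y+\alpha\mathrm{Tr}(H(y))$. Raising this equality to the $p^t$\nobreakdash-th power and applying the identities above gives $x^{p^t}+\alpha^{p^t}\mathrm{Tr}(H(x^{p^t}))=y^{p^t}+\alpha^{p^t}\mathrm{Tr}(H(y^{p^t}))$. Since $\phi$ is a bijection, $u:=x^{p^t}$ and $v:=y^{p^t}$ are distinct elements of $\mathbb{F}_{q^n}$ satisfying $u+\alpha^{p^t}\mathrm{Tr}(H(u))=v+\alpha^{p^t}\mathrm{Tr}(H(v))$, which witnesses that $x+\alpha^{p^t}\mathrm{Tr}(H(x))$ is not a permutation polynomial; hence $\alpha^{p^t}\notin P_H$. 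In the language of the Remark, this says exactly that $\phi(\hat{P}_H)\subseteq\hat{P}_H$, where $\hat{P}_H=\mathbb{F}_{q^n}\setminus P_H$.

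Finally, the reverse implication $\alpha^{p^t}\notin P_H\Rightarrow\alpha\notin P_H$ follows formally: $\hat{P}_H$ is finite and $\phi$ is injective, so $\phi(\hat{P}_H)\subseteq\hat{P}_H$ forces $\phi(\hat{P}_H)=\hat{P}_H$; thus $\alpha^{p^t}=\phi(\alpha)\in\hat{P}_H$ implies $\alpha\in\hat{P}_H$, i.e. $\alpha\notin P_H$. (Alternatively, one can simply iterate the first implication: $\phi$ has finite order $k$ on $\mathbb{F}_{q^n}$, so $\alpha^{p^t}\notin P_H$ gives $\alpha^{p^{2t}},\dots,\alpha^{p^{kt}}=\alpha\notin P_H$.) I do not anticipate a real obstacle here; the only points requiring a little care are justifying that the trace commutes with the $p^t$\nobreakdash-Frobenius and that $\mathbb{F}_{p^t}\subseteq\mathbb{F}_{q^n}$, after which the argument is a direct substitution.
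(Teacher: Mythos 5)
Your proof is correct and follows essentially the same route as the paper's: both rest on the identities $H(x)^{p^t}=H(x^{p^t})$ and $\mathrm{Tr}(z)^{p^t}=\mathrm{Tr}(z^{p^t})$, apply the $p^t$-Frobenius to a witness of non-injectivity to get the forward implication, and obtain the converse by iterating (the paper) or, equivalently, by injectivity of Frobenius on the finite set $\hat{P}_H$ (your first argument). The only cosmetic difference is that the paper phrases the witness via the difference-quotient description $\alpha=\frac{y-x}{b-c}$ from Theorem 2.1, while you apply Frobenius directly to the collision equation $x+\alpha\mathrm{Tr}(H(x))=y+\alpha\mathrm{Tr}(H(y))$; these are interchangeable.
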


\begin{proof}
If \(\alpha\notin P_H\), then by Theorem 2.1, there exist \(x\in I_b\) and \(y\in I_c\) with $b\neq c$ such that \(\alpha=\frac{y - x}{b - c}\), where \(\mathrm{Tr}(H(x)) = b\) and \(\mathrm{Tr}(H(y)) = c\). Since \(H(x)\in\mathbb{F}_{p^t}[x]\), we have
\[b^{p^t}=\mathrm{Tr}^{p^t}(H(x))=\mathrm{Tr}(H(x^{p^t})),\quad c^{p^t}=\mathrm{Tr}^{p^t}(H(y))=\mathrm{Tr}(H(y^{p^t})),\]
and clearly \(b^{p^t}\neq c^{p^t}\). Then,
\[\alpha^{p^t}=\left(\frac{y-x}{b-c}\right)^{p^t}=\frac{y^{p^t}-x^{p^t}}{b^{p^t}-c^{p^t}}.\]
By Theorem 2.1, we can conclude that \(\alpha^{p^t}\notin P_H\).

Conversely, if $\alpha^{p^t}\notin P_H$, then $\alpha^{p^t},\alpha^{p^{2t}},\ldots,\alpha^{q^n}=\alpha$ are all not in $P_H$. Thus, the theorem is proved.
\qed\end{proof}

\subsection{The relationship between $P_H$ and linear translators}
In this subsection, we illustrate the relationship between the set \(P_H\) and linear translators. The definition of linear translators is given as follows.
\begin{definition}[\hspace{-0.01em}\cite{Kyureghyan}]
Let \(f: \mathbb{F}_{q^n} \to \mathbb{F}_q\), \(a \in \mathbb{F}_q\) and \(\alpha\) be a nonzero element in \(\mathbb{F}_{q^n}\). If \(f(x + u\alpha) - f(x) = ua\) for all \(x \in \mathbb{F}_{q^n}\) and all \(u \in \mathbb{F}_q\), then we say that \(\alpha\) is an \(a\)-linear translator of the function \(f\).
\end{definition}

In 2008, Charpin and Kyureghyan \cite{Charpin-Kyureghyan0} proved that if \( G(X) \) is a permutation polynomial or a linearized polynomial, then the problem of studying permutation polynomials over \(\mathbb{F}_{2^n}\) of the form \( G(X) + \gamma \mathrm{Tr}(H(X)) \) reduces to finding Boolean functions with linear structures.

We consider the general case over $\mathbb{F}_{q^n}$ and obtain the following theorems.

\begin{theorem}
$\alpha\mathbb{F}_q\subset P_H$ if and only if $\alpha$ is a $0$-linear translator of $\mathrm{Tr}(H(x)).$
\end{theorem}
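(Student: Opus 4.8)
The plan is to deduce everything from the characterization of $P_H$ in Theorem 2.1, keeping careful track of the fact that $b-c$ and the scalars $u$ involved all lie in $\mathbb{F}_q$. Throughout I assume $\alpha \neq 0$ (the only case in which the notion of $0$-linear translator is defined), noting that $0\in P_H$ always, so the scalar $u=0$ contributes nothing.

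For the direction ``$\alpha$ a $0$-linear translator $\Rightarrow \alpha\mathbb{F}_q\subset P_H$'', I would fix $u\in\mathbb{F}_q^*$ and show directly that $x+u\alpha\,\mathrm{Tr}(H(x))$ is injective. Suppose $x+u\alpha\,\mathrm{Tr}(H(x))=y+u\alpha\,\mathrm{Tr}(H(y))$. Then $x-y=u\alpha\bigl(\mathrm{Tr}(H(y))-\mathrm{Tr}(H(x))\bigr)$, and writing $t:=\mathrm{Tr}(H(y))-\mathrm{Tr}(H(x))\in\mathbb{F}_q$ gives $x=y+(ut)\alpha$ with $ut\in\mathbb{F}_q$. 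The defining property of a $0$-linear translator yields $\mathrm{Tr}(H(x))=\mathrm{Tr}(H(y+(ut)\alpha))=\mathrm{Tr}(H(y))$, so $t=0$ and hence $x=y$. Thus $u\alpha\in P_H$ for every $u\in\mathbb{F}_q$, i.e.\ $\alpha\mathbb{F}_q\subset P_H$.

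For the converse I would argue by contraposition. If $\alpha$ is not a $0$-linear translator of $\mathrm{Tr}(H(x))$, there exist $x_0\in\mathbb{F}_{q^n}$ and $u_0\in\mathbb{F}_q$ (necessarily $u_0\neq 0$) with $\mathrm{Tr}(H(x_0+u_0\alpha))\neq\mathrm{Tr}(H(x_0))$. Put $b=\mathrm{Tr}(H(x_0))$ and $c=\mathrm{Tr}(H(x_0+u_0\alpha))$, so $b\neq c$, both lie in $S$, $x_0\in I_b$, and $x_0+u_0\alpha\in I_c$. Then by Theorem 2.1,
\[
\frac{(x_0+u_0\alpha)-x_0}{b-c}=\frac{u_0}{\,b-c\,}\,\alpha\notin P_H.
\]
Since $b-c\in\mathbb{F}_q^*$ and $u_0\in\mathbb{F}_q^*$, the element $\frac{u_0}{b-c}\alpha$ lies in $\alpha\mathbb{F}_q$, so $\alpha\mathbb{F}_q\not\subset P_H$, as required.

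There is no serious obstacle here: the content of the statement is essentially Theorem 2.1 together with the observation that, in both directions, the vector $x-y$ joining a colliding pair (resp.\ the ``bad direction'' produced by a failure of the translator property) is forced to be an $\mathbb{F}_q$-multiple of $\alpha$, because $\mathrm{Tr}(H(\cdot))$ takes values in $\mathbb{F}_q$. The one point to state carefully is that the scalar $\tfrac{u_0}{b-c}$ produced in the converse is nonzero and in $\mathbb{F}_q$, so that it genuinely witnesses $\alpha\mathbb{F}_q\not\subset P_H$ rather than the trivial element $0$.
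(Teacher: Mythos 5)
Your proposal is correct and follows essentially the same route as the paper: both directions ultimately rest on Theorem 2.1 together with the observation that the relevant difference vectors along the coset $x+\alpha\mathbb{F}_q$ are $\mathbb{F}_q$-multiples of $\alpha$ because $\mathrm{Tr}(H(\cdot))$ is $\mathbb{F}_q$-valued. The only cosmetic difference is that you prove the implication ``$0$-linear translator $\Rightarrow \alpha\mathbb{F}_q\subset P_H$'' by a direct injectivity check rather than by the paper's contradiction via Theorem 2.1, and you correctly flag the point the paper leaves implicit, namely that the witness $\tfrac{u_0}{b-c}$ is a nonzero element of $\mathbb{F}_q$.
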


\begin{proof}
The element \(\alpha\) is a \(0\)-linear translator of \(\mathrm{Tr}(H(x))\) if and only if \(\mathrm{Tr}(H(x+\alpha u)) = \mathrm{Tr}(H(x)) = b_x\) for all \(u \in \mathbb{F}_q\). This is equivalent to stating that for every \(x \in \mathbb{F}_{q^n}\), there exists some \(b_x \in S\) such that the coset \(x + \alpha\mathbb{F}_q\) is contained in \(I_{b_x}\). We now show that this last condition is equivalent to the set inclusion \(\alpha\mathbb{F}_q \subset P_H\).

Assume that \(\alpha\mathbb{F}_q \subset P_H\). We proceed by contradiction: suppose there exist \(a_1 \neq a_2\) in \(\mathbb{F}_q\) satisfying \(x + \alpha a_1 \in I_b\) and \(x + \alpha a_2 \in I_c\) for some \(b \neq c\) in \(S\). By Theorem 2.1, it follows that
\[\frac{(x + \alpha a_1) - (x + \alpha a_2)}{b - c} = \frac{\alpha(a_1 - a_2)}{b - c} \notin P_H,\]
which contradicts the fact that \(\alpha \cdot \frac{a_1 - a_2}{b - c} \in \alpha\mathbb{F}_q \subset P_H\).

Conversely, assume that \(\alpha\mathbb{F}_q \not\subset P_H\). Then there exists \(u \in \mathbb{F}_q\) such that \(\alpha u \notin P_H\). By the definition of \(P_H\), there exist \(x \in I_b\), \(y \in I_c\) with \(b \neq c\) satisfying \(\alpha u = \frac{y - x}{b - c}\). This implies \(y = x + \alpha u(b - c) \in x + \alpha\mathbb{F}_q\subset I_b\), it follows that \(y \in I_b\), contradicting \(y \in I_c\) with \(b \neq c\).
\qed\end{proof}

Now, let us explore the case where \(\mathrm{Tr}(H(x))\) has a $b$-linear translator, where \(b \neq 0\). Recall that \(I_b=\{x\in \mathbb{F}_{q^n} : \mathrm{Tr}(H(x))=b\}\).

\begin{theorem}
Suppose that \( \alpha \in \mathbb{F}_q^* \) is a \(b\)-linear translator of \( \mathrm{Tr}(H(x)) \) with \( b \neq 0 \). Then

(a) If \( \alpha \mathbb{F}_q \) is uniformly distributed over \( I_0, I_1, \ldots, I_{q-1} \), then \( \alpha \mathbb{F}_q \setminus \{ -\alpha b^{-1} \} \subset P_H \).

(b) If \( \alpha \mathbb{F}_q \) is not uniformly distributed over \( I_0, I_1, \ldots, I_{q-1} \), then \( \alpha \mathbb{F}_q \cap P_H =\{0\}\).
\end{theorem}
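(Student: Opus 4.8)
The plan is to combine the concrete criterion of Theorem~2.1 with the defining identity of a $b$-linear translator. Fix $u\in\mathbb{F}_q$ and put $\gamma=\alpha u$. By Theorem~2.1, $\gamma\notin P_H$ exactly when there are $c,d\in S$ with $c\neq d$ and points $x\in I_c$, $y\in I_d$ with $\gamma=(y-x)/(c-d)$, i.e.\ $y=x+\bigl(u(c-d)\bigr)\alpha$. Since $u(c-d)\in\mathbb{F}_q$, the translator identity applies with scalar $u(c-d)$ and gives $\mathrm{Tr}(H(y))=\mathrm{Tr}(H(x))+u(c-d)b=c+u(c-d)b$; comparing with $\mathrm{Tr}(H(y))=d$ forces $d-c=u(c-d)b$, hence $(d-c)(1+ub)=0$, and since $d\neq c$ we get $ub=-1$, i.e.\ $\gamma=-\alpha b^{-1}$. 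Thus the only element of $\alpha\mathbb{F}_q$ that can fail to lie in $P_H$ is $-\alpha b^{-1}$, which is exactly part~(a).

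Next I would verify that this single exclusion is sharp, which also clarifies the dichotomy. As $\mathrm{Tr}(H)$ is non-constant, $S\neq\emptyset$; pick $c\in S$, pick $d\neq c$ in $\mathbb{F}_q$, pick $x\in I_c$, and set $y:=x+\bigl((d-c)b^{-1}\bigr)\alpha$. The translator identity gives $\mathrm{Tr}(H(y))=c+(d-c)=d$, so $y\in I_d$ (hence $d\in S$), while $(y-x)/(c-d)=-\alpha b^{-1}$; so $-\alpha b^{-1}\notin P_H$. Together with the previous step this gives $\alpha\mathbb{F}_q\cap P_H=\alpha\mathbb{F}_q\setminus\{-\alpha b^{-1}\}$ whenever the hypotheses of the theorem hold.

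For part~(b), the cleanest route is to note that, under the standing hypothesis, its premise cannot occur, so (b) is true vacuously: taking $x=0$ in the translator identity gives $\mathrm{Tr}(H(u\alpha))=\mathrm{Tr}(H(0))+ub$ for all $u\in\mathbb{F}_q$, and since $b\neq0$ the affine map $u\mapsto\mathrm{Tr}(H(0))+ub$ is a bijection of $\mathbb{F}_q$; hence $\mathrm{Tr}(H)$ carries the $q$ elements of $\alpha\mathbb{F}_q$ bijectively onto $\mathbb{F}_q$, so $\alpha\mathbb{F}_q$ meets each $I_0,\dots,I_{q-1}$ exactly once and is always uniformly distributed. (Should the standing hypothesis be intended more weakly — only that $\mathrm{Tr}(H)$ is affine of slope $b$ on $\alpha\mathbb{F}_q$, without the full translator property — then (b) requires a direct argument: non-uniformity yields distinct $u_1,u_2$ with $u_1\alpha,u_2\alpha$ in one level set, and one must use Theorem~2.1 to expel \emph{every} nonzero element of $\alpha\mathbb{F}_q$ from $P_H$, which is the genuinely delicate direction.) Throughout, the one subtlety is bookkeeping: the outer scalar $u$ defining $\gamma=\alpha u$ must be kept distinct from the inner scalar $c-d$ arising from the two level sets, since it is their product $u(c-d)$, not $u$ alone, that is inserted into the translator identity.
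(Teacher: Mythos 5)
Your argument is correct, and for part (a) it is in fact more complete than the paper's own. The paper only verifies that $f(\alpha u)\neq f(\alpha v)$ for distinct $u,v\in\mathbb{F}_q$ unless $\gamma=-\alpha b^{-1}$, i.e., injectivity of $f$ restricted to the line $\alpha\mathbb{F}_q$ through the origin; but $\gamma\in P_H$ requires ruling out collisions between arbitrary points of $\mathbb{F}_{q^n}$, and a collision pair for $\gamma=\alpha u$ only satisfies $y\in x+\alpha\mathbb{F}_q$ with $x$ arbitrary, not $x,y\in\alpha\mathbb{F}_q$. Your route through Theorem~2.1 handles exactly this: the offending pair has $y=x+u(c-d)\alpha$, and the translator identity, valid at every base point $x$ and not just at $0$, forces $(d-c)(1+ub)=0$, hence $ub=-1$. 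This closes the gap in the published proof (which applies the identity only with base point $0$). Your treatment of (b) coincides with the paper's in substance: the paper's "proof" of (b) likewise ends in a contradiction, i.e., both of you show that when $b\neq0$ the map $u\mapsto\mathrm{Tr}(H(0))+ub$ is a bijection of $\mathbb{F}_q$, so $\alpha\mathbb{F}_q$ is automatically uniformly distributed over $I_0,\dots,I_{q-1}$ and (b) holds vacuously. Your additional verification that $-\alpha b^{-1}\notin P_H$, so that the single exclusion in (a) is sharp and $\alpha\mathbb{F}_q\cap P_H=\alpha\mathbb{F}_q\setminus\{-\alpha b^{-1}\}$, goes beyond what the theorem asserts and is a worthwhile supplement.
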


\begin{proof}
Part (a). For any \( u, v \in \mathbb{F}_q \) such that \( \alpha u \in I_i \) and \( \alpha v \in I_j \) with \( i \neq j \), it follows that \( \mathrm{Tr}(H(\alpha u)) \neq \mathrm{Tr}(H(\alpha v)) \). Consequently, \( \gamma  \mathrm{Tr}(H(\alpha u)) \neq \gamma  \mathrm{Tr}(H(\alpha v)) \).

Assume that \( \alpha u + \gamma  \mathrm{Tr}(H(\alpha u)) = \alpha v + \gamma  \mathrm{Tr}(H(\alpha v)) \). Rearranging gives
\[\begin{split}
\alpha(u - v) &= \gamma \left( \mathrm{Tr}(H(\alpha v)) - \mathrm{Tr}(H(\alpha u)) \right) \\
&= \gamma \left[ \mathrm{Tr}(H(0+\alpha v)) - \mathrm{Tr}(H(0))+ \mathrm{Tr}(H(0))- \mathrm{Tr}(H(0+\alpha u))  \right]\\
&= \gamma b(v - u).
\end{split}\]
Clearly, the equation above holds if and only if \( \gamma = -\alpha b^{-1} \). This implies that \( \alpha \mathbb{F}_q \setminus \{ -\alpha b^{-1} \} \subset P_H \).

Part (b). There exist distinct elements \( u, v \in \mathbb{F}_q \) such that \( \alpha u \) and \( \alpha v \) belong to the same set \( I_b \), that is, they satisfy \( \mathrm{Tr}(H(\alpha u)) = \mathrm{Tr}(H(\alpha v)) \).
From this, it follows that
\[\mathrm{Tr}(H(0 + \alpha u)) - \mathrm{Tr}(H(0)) = \mathrm{Tr}(H(0 + \alpha v)) - \mathrm{Tr}(H(0)),\]
which implies \( bu = bv \), since $\alpha$ is a \(b\)-linear translator of \( \mathrm{Tr}(H(x)) \). We thus have \( u = v \) because of \( b \neq 0 \). However, this contradicts the fact that \( u \) and \( v \) are distinct elements.
\qed\end{proof}

Denote the set of all linear translators of \(\mathrm{Tr}(H(x))\) as \(\Lambda^*\). Let \(\Lambda = \Lambda^* \cup \{0\}\). Then, \(\Lambda\) is an $\mathbb{F}_q$-linear space over \(\mathbb{F}_{q^n}\) (see \cite{Lai}). Suppose the dimension of \(\Lambda\) is \(m\). Take \(\{\alpha_1, \alpha_2, \dots, \alpha_m\}\) as a basis for \(\Lambda\), where \(\alpha_i\) is a \(b_i\)-linear translator of \(\mathrm{Tr}(H(x))\) for \(i = 1, \dots, m\). By Theorem 2.4, we have known that if $\alpha_i$ is a $0$-linear translator of $\mathrm{Tr}(H(x))$, for all $i=1,2,\cdots,m$, then $\alpha_i \mathbb{F}_q \subseteq {P}_H$. Thus $\alpha_1 \mathbb{F}_q + \cdots + \alpha_m \mathbb{F}_q \subseteq {P}_H$.

We now consider that not all $\alpha_i$ are $0$-linear translators. Without loss of generality, assume that $\alpha_1$ is a $b_1$-linear translator with $b_1 \neq 0$. Then we have the following:

\begin{theorem}
For any \(\alpha_i\in\Lambda\), \(\alpha_1b_i - \alpha_ib_1\) is a \(0\)-linear translator of \(\mathrm{Tr}(H(x))\). Moreover, the set \(\{\alpha_1b_i - \alpha_ib_1 : i = 2, \dots, m\}\) generates an \((m - 1)\)-dimensional \(\mathbb{F}_q\)-linear subspace consisting of \(0\)-linear translators.
\end{theorem}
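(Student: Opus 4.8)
The plan is to reduce everything to one structural fact: the map sending each linear translator to its translator value is $\mathbb{F}_q$-linear on $\Lambda$. So the first step is to set $f(x)=\mathrm{Tr}(H(x))$ and define $b:\Lambda\to\mathbb{F}_q$ by letting $b(\alpha)$ be the value for which $\alpha$ is a $b(\alpha)$-linear translator (and $b(0)=0$); in particular $b(\alpha_i)=b_i$. I would check it is well defined (if $\alpha$ is both an $a$- and an $a'$-linear translator, then $ua=ua'$ for all $u$, so $a=a'$), that $b(c\alpha)=c\,b(\alpha)$ for $c\in\mathbb{F}_q$ (reindex $u\mapsto uc$ in the defining identity), and that $b$ is additive via the telescoping identity
\[
f(x+u(\alpha+\beta))-f(x)=\big(f((x+u\alpha)+u\beta)-f(x+u\alpha)\big)+\big(f(x+u\alpha)-f(x)\big),
\]
where the first bracket equals $u\,b(\beta)$ because $\beta$ is a $b(\beta)$-linear translator, now applied at the shifted point $x+u\alpha$, and the second equals $u\,b(\alpha)$. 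Thus $b$ is $\mathbb{F}_q$-linear, and its kernel $\ker b$ is precisely the set of $0$-linear translators together with the zero element, hence an $\mathbb{F}_q$-subspace of $\Lambda$. (The linear-space structure of $\Lambda$ is already recorded, citing \cite{Lai}.)

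With this in hand the first claim is immediate: $b(\alpha_1b_i-\alpha_ib_1)=b_i\,b(\alpha_1)-b_1\,b(\alpha_i)=b_ib_1-b_1b_i=0$, so $\alpha_1b_i-\alpha_ib_1\in\ker b$. To upgrade membership in $\ker b$ to being a genuine $0$-linear translator for $i=2,\dots,m$, I would note the element is nonzero: if $b_i=0$ it equals $-b_1\alpha_i\neq0$ (as $b_1\neq0$ and $\alpha_i\neq0$), and if $b_i\neq0$ then vanishing would force $\alpha_1,\alpha_i$ to be $\mathbb{F}_q$-proportional, contradicting that $\{\alpha_1,\dots,\alpha_m\}$ is a basis of $\Lambda$.

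For the \emph{moreover} part, since every $\alpha_1b_i-\alpha_ib_1$ lies in the subspace $\ker b$, it suffices to show the $m-1$ vectors $\{\alpha_1b_i-\alpha_ib_1:i=2,\dots,m\}$ are $\mathbb{F}_q$-linearly independent. From a relation $\sum_{i=2}^m c_i(\alpha_1b_i-\alpha_ib_1)=0$ with $c_i\in\mathbb{F}_q$ one collects terms to get
\[
\Big(\sum_{i=2}^m c_ib_i\Big)\alpha_1-b_1\sum_{i=2}^m c_i\alpha_i=0;
\]
since $\{\alpha_1,\dots,\alpha_m\}$ is a basis, the coefficient $-b_1c_i$ of each $\alpha_i$ with $i\ge2$ must vanish, and $b_1\neq0$ forces all $c_i=0$. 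Hence these vectors span an $(m-1)$-dimensional $\mathbb{F}_q$-subspace contained in $\ker b$, i.e. consisting of $0$-linear translators of $\mathrm{Tr}(H(x))$.

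The argument is short and mostly bookkeeping once the linearity of $b$ is established; the one spot demanding care is the additivity step, where the translator identity for $\beta$ must be invoked at the shifted argument $x+u\alpha$ rather than at $x$ — this is precisely what makes $\Lambda$ (and its value map) behave linearly. Everything else only uses that $\{\alpha_1,\dots,\alpha_m\}$ is a basis and that $b_1\neq0$.
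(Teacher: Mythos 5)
Your proposal is correct and follows essentially the same route as the paper: the paper's own argument is exactly your telescoping identity (with the translator property of $\alpha_i$ invoked at the shifted point $x+ub_i\alpha_1$), applied directly to the element $\alpha_1b_i-\alpha_ib_1$ rather than packaged as linearity of the value map $b$, and its linear-independence argument for the \emph{moreover} part is identical to yours. Your added check that each $\alpha_1b_i-\alpha_ib_1$ is nonzero (so that it is a linear translator in the strict sense of the definition) is a small point the paper leaves implicit.
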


\begin{proof}
On the one hand, for any \(\alpha\in\Lambda\), we have
\[\begin{split}
&~\mathrm{Tr}(H(x+u(\alpha_1b_i - \alpha_ib_1)))-\mathrm{Tr}(H(x))\\
=&~\mathrm{Tr}(H(x+ub_i\alpha_1 - ub_1\alpha_i)))-\mathrm{Tr}(H(x+ub_i\alpha_1))+\mathrm{Tr}(H(x+ub_i\alpha_1))-\mathrm{Tr}(H(x))\\
=&-ub_1b_i+ub_ib_1=0.
\end{split}\]
From this, we can conclude that \(\alpha_1b_i - \alpha_ib_1\) is a \(0\)-linear translator of \(\mathrm{Tr}(H(x))\).

On the other hand, assume that
\[k_2(\alpha_1b_2 - \alpha_2b_1)+k_3(\alpha_1b_3 - \alpha_3b_1)+\cdots + k_m(\alpha_1b_m - \alpha_mb_1)=0,\]
that is,
\[(k_2b_2 + k_3b_3+\cdots + k_mb_m)\alpha_1 - k_2b_1\alpha_2 - \cdots - k_mb_1\alpha_m = 0.\]
Since \(\alpha_1, \dots, \alpha_m\) form a basis, they are linearly independent. Thus, we can obtain that
\[k_2b_2 + k_3b_3+\cdots + k_mb_m = 0,~-k_2b_1 = 0,\cdots,-k_mb_1 = 0.\]
Also, because \(b_1 \neq 0\), we have \(k_2 = \cdots = k_m = 0\). This indicates that the set \(\{\alpha_1b_i - \alpha_ib_1 : i = 2, \dots, m\}\) is linearly independent. Thus, the set \(\{\alpha_1b_i - \alpha_ib_1 : i = 2, \dots, m\}\) generates an \((m - 1)\)-dimensional \(\mathbb{F}_q\)-linear subspace consisting of \(0\)-linear translators.
\qed\end{proof}

Denote the linear subspace generated by \(\{\alpha_1b_i - \alpha_ib_1 : i = 2, \dots, m\}\) as \(\Lambda_{0}=\langle\alpha_1b_i - \alpha_ib_1 : i = 2, \dots, m\rangle=\langle\beta_i : i = 2, \dots, m\rangle\), where \(\beta_i = \alpha_1b_i - \alpha_ib_1\).

\begin{theorem}
Suppose that \(\alpha\in\mathbb{F}^*_{q^n}\) is a \(b\)-linear translator of \(\mathrm{Tr}(H(x))\), where \(b\neq0\). Then, all elements in the set \(\alpha + \Lambda_{0}\) are \(b\)-linear translators of \(\mathrm{Tr}(H(x))\). Moreover, the set \(\{\alpha, \alpha + \beta_2, \dots, \alpha + \beta_m\}\) forms a basis for \(\Lambda\).
\end{theorem}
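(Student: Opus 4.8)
The plan is to prove the two assertions separately, reusing the telescoping identity that already appeared in the proof of Theorem 2.8.

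For the first assertion, let $\beta\in\Lambda_0$; by construction $\beta$ is a $0$-linear translator of $\mathrm{Tr}(H(x))$. For arbitrary $x\in\mathbb{F}_{q^n}$ and $u\in\mathbb{F}_q$ I would write
\[
\mathrm{Tr}(H(x+u(\alpha+\beta)))-\mathrm{Tr}(H(x))=\big[\mathrm{Tr}(H((x+u\alpha)+u\beta))-\mathrm{Tr}(H(x+u\alpha))\big]+\big[\mathrm{Tr}(H(x+u\alpha))-\mathrm{Tr}(H(x))\big].
\]
The first bracket vanishes because $\beta$ is a $0$-linear translator (apply the defining property at the point $x+u\alpha$ with the same $u$), and the second bracket equals $ub$ because $\alpha$ is a $b$-linear translator. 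Hence the sum is $ub$, so $\alpha+\beta$ is a $b$-linear translator; in particular $\alpha+\beta\neq0$, since an element cannot be simultaneously a $b$-linear translator with $b\neq0$ and a $0$-linear translator. Thus $\alpha+\beta\in\Lambda^*\subset\Lambda$, and the whole coset $\alpha+\Lambda_0$ consists of $b$-linear translators.

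For the second assertion, the key observation is that $\alpha\notin\Lambda_0$: indeed $\Lambda_0$ consists of $0$-linear translators, while $\alpha$ is a $b$-linear translator with $b\neq0$, and if $\alpha$ were also a $0$-linear translator then $ub=0$ for all $u\in\mathbb{F}_q$, forcing $b=0$, a contradiction. Since $\Lambda_0$ has dimension $m-1$ by Theorem 2.8 and $\alpha\notin\Lambda_0$, the subspace $\langle\alpha\rangle+\Lambda_0=\langle\alpha,\beta_2,\dots,\beta_m\rangle$ has dimension $m$; being contained in the $m$-dimensional space $\Lambda$, it must equal $\Lambda$, so $\{\alpha,\beta_2,\dots,\beta_m\}$ is a basis of $\Lambda$. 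Finally, passing from $\{\alpha,\beta_2,\dots,\beta_m\}$ to $\{\alpha,\alpha+\beta_2,\dots,\alpha+\beta_m\}$ is effected by a unipotent (upper triangular, unit diagonal) change-of-basis matrix over $\mathbb{F}_q$, hence invertible; equivalently, the two sets span the same subspace because $\beta_i=(\alpha+\beta_i)-\alpha$, and $\{\alpha,\alpha+\beta_2,\dots,\alpha+\beta_m\}$ has exactly $m=\dim\Lambda$ elements. Therefore $\{\alpha,\alpha+\beta_2,\dots,\alpha+\beta_m\}$ is a basis for $\Lambda$.

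I do not expect a genuine obstacle here: the computation in the first part is the same telescoping trick used before, and the second part is a short dimension count. The only point needing a little care is verifying $\alpha\notin\Lambda_0$ — which is exactly where the hypothesis $b\neq0$ enters — together with being explicit about the unipotent change of basis; once these are in place the conclusion is immediate.
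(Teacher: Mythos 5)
Your proof is correct and rests on the same key observation as the paper's: $\alpha$ cannot lie in $\Lambda_0=\langle\beta_2,\dots,\beta_m\rangle$ because the nonzero elements of $\Lambda_0$ are $0$-linear translators while $\alpha$ is a $b$-linear translator with $b\neq 0$, and whether one then argues by a dimension count plus a unipotent change of basis (as you do) or by testing a linear relation $k_1\alpha+\sum_{i\ge 2}k_i(\alpha+\beta_i)=0$ directly (as the paper does) is immaterial. You also supply the telescoping verification of the first assertion, which the paper explicitly omits (``we only prove the second part''), so your write-up is if anything more complete; the only blemish is a misattributed reference, since the $(m-1)$-dimensionality of $\Lambda_0$ and the telescoping identity come from the theorem immediately preceding this one, not from Theorem 2.8.
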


\begin{proof}
We only prove the second part. Suppose that
\[k_1\alpha + k_2(\alpha + \beta_2)+\cdots + k_m(\alpha + \beta_m)=0.\]
By rearranging the terms, we get
\[(k_1 + k_2+\cdots + k_m)\alpha=-k_2\beta_2-\cdots - k_m\beta_m .\]
Since \(\alpha\) is not a \(b\)-linear translator ($b\neq0$), it cannot be linearly represented by \(\beta_2,\dots,\beta_m\). Therefore, it must be the case that
\[(k_1 + k_2+\cdots + k_m)\alpha=-k_2\beta_2-\cdots - k_m\beta_m = 0.\]
Moreover, because \(\beta_2,\dots,\beta_m\) are linearly independent, we have
\[-k_2 = 0,\cdots,-k_m = 0.\]
Considering that \(\alpha\in\mathbb{F}_{q^n}^*\), we can conclude that \(k_1 = 0\). In conclusion, \(k_1=\cdots = k_m = 0\), which indicates that \(\{\alpha,\alpha + \beta_2,\dots,\alpha + \beta_m\}\) is linearly independent and thus forms a basis for \(\Lambda\).
\qed\end{proof}

\subsection{The cardinality of the set $P_H$}
Let \(A\) be a set, and let \(|A|\) denote the cardinality of set \(A\). We provide an effective upper bound for the cardinality of the set $P_H$ and show that the upper bound can reach up to $q^n - q^{n - 1}$. Furthermore, we prove that when the cardinality of the set \(P_H\) reaches this upper bound, the function \(\mathrm{Tr}(H(x))\) must be an \(\mathbb{F}_q\)-linear function. We need the following lemmas. The first lemma is due to Ball \cite{Ball}, and the second is due to Yuan \cite{Yuan3}.

Recall that \( D_f \) denotes the direction set determined by the function \( f \).
\begin{lemma}[\hspace{-0.01em}\cite{Ball}]
Let \( p \) be a prime and let \( q = p^n \). Let \( f : \mathbb{F}_q \to \mathbb{F}_q \) be a function such that \( f(0) = 0 \). If \( |D_f| \leq \frac{q + 1}{2} \), then \( f \) is a linearized polynomial.
\end{lemma}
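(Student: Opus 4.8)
\noindent\emph{Sketch (proof proposal).} This is a deep theorem of Ball \cite{Ball}, sharpening earlier work of R\'edei \cite{Redei} and Blokhuis et al.\ \cite{Blokhuis-Ball-Brouwer-Storme-Szonyi}, so in the paper we merely invoke it; here I only indicate the line of argument one would follow. The plan is to encode the function $f$ in its \emph{R\'edei polynomial}, to read off a large ``gap'' (lacunary) structure that the hypothesis $|D_f|\le (q+1)/2$ forces on it, and then to feed this into the classical structure theorem for fully reducible lacunary polynomials over $\mathbb{F}_q$.

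First I would form the R\'edei polynomial
\[R(X,Y)=\prod_{a\in\mathbb{F}_q}\bigl(X+aY-f(a)\bigr)\in\mathbb{F}_q[X,Y],\]
which is monic of degree $q$ in $X$; writing $R(X,Y)=\sum_{j=0}^{q}\sigma_j(Y)X^{q-j}$, the coefficient $\sigma_j$ is the $j$-th elementary symmetric function of the linear polynomials $aY-f(a)$, so $\sigma_0=1$ and $\deg_Y\sigma_j\le j$. The elementary but crucial point is that for a slope $m\in\mathbb{F}_q$ the $q$ elements $f(a)-am$ (for $a\in\mathbb{F}_q$) are pairwise distinct exactly when $m\notin D_f$, in which case they exhaust $\mathbb{F}_q$ and so $R(X,m)=\prod_{c\in\mathbb{F}_q}(X-c)=X^q-X$. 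Comparing coefficients, $\sigma_j(m)=0$ for every non-direction $m$ and every $1\le j\le q-2$; since there are at least $q-|D_f|\ge (q-1)/2$ non-directions whereas $\deg_Y\sigma_j\le j$, it follows that $\sigma_j\equiv 0$ for all $j<(q-1)/2$. Hence $R(X,Y)=X^q+\sum_{j\ge (q-1)/2}\sigma_j(Y)X^{q-j}$: viewed in $X$, it has a gap of length roughly $q/2$ immediately below the leading term.

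Next I would use that for \emph{every} $m\in\mathbb{F}_q$ --- the genuine directions included --- the polynomial $R(X,m)=\prod_a(X+am-f(a))$ is a product of $q$ linear factors over $\mathbb{F}_q$, hence fully reducible, the multiplicity of a root counting the points of the graph $U=\{(a,f(a)):a\in\mathbb{F}_q\}$ on the corresponding line. One then invokes the structure theorem for fully reducible lacunary polynomials (R\'edei, refined by Blokhuis et al., made sharp by Ball): a monic fully reducible $g\in\mathbb{F}_q[X]$ of the form $g=X^q+(\text{terms of degree}\le (q+1)/2)$ is extremely rigid --- either $g=X^q-X$, or, essentially, all exponents occurring in $g$ together with $q$ are divisible by a common power $p^{e}>1$ of $p$, equivalently the roots of $g$ form a union of cosets of an $\mathbb{F}_p$-subspace of $(\mathbb{F}_q,+)$. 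Applying this to the specializations $R(X,m)$ as $m$ runs through $D_f$ forces the graph $U$ to be, along every secant direction, a union of cosets of one fixed subspace; combining these compatible decompositions and using $f(0)=0$ then yields that $f$ is $\mathbb{F}_p$-additive, i.e.\ a linearized polynomial (and in the extreme case $|D_f|=1$, simply $f(x)=cx$).

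The entire weight of the argument rests on this last ingredient --- the structure theorem for fully reducible lacunary polynomials pushed all the way to the threshold $(q+1)/2$. R\'edei's original lacunary-polynomial method left a range of cases unresolved, which Blokhuis et al.\ narrowed and Ball finally closed; carrying it out demands delicate control of greatest common divisors of the type $\gcd\!\bigl(R(X,m)-(X^q-X),\,\partial R(X,m)/\partial X\bigr)$, $p$-power divisibility of the exponents that survive the gap, and a careful census of the multiple points on secant lines. Since all of this is worked out in \cite{Ball}, we use it as a black box.
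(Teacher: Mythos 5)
The paper offers no proof of this lemma: it is imported verbatim from Ball's paper \cite{Ball} and used as a black box, exactly as you do. Your sketch of the underlying argument --- specializing the R\'edei polynomial $\prod_{a}(X+aY-f(a))$ at non-directions to kill the elementary symmetric functions $\sigma_j$ for $j<(q-1)/2$, then applying the structure theorem for fully reducible lacunary polynomials to the specializations at genuine directions --- is an accurate account of how the result is actually established in \cite{Redei}, \cite{Blokhuis-Ball-Brouwer-Storme-Szonyi} and \cite{Ball}, so there is nothing to reconcile with the paper.
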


\begin{lemma}[\hspace{-0.01em}\cite{Yuan3}]
Let \( f(x) \in \mathbb{F}_{q^n}[x] \) be a map from \( \mathbb{F}_{q^n} \) to \( \mathbb{F}_q \). Then \( f(x) \) is a \( q \)-polynomial if and only if \( f(x) = \mathrm{Tr}(ux) \) for some \( u \in \mathbb{F}_{q^n} \).
\end{lemma}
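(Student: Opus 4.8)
The plan is to establish the two implications separately; the backward direction is immediate, while the forward direction reduces, after a coefficient comparison, to solving a cyclic recurrence.

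For the ``if'' direction, suppose $f(x)=\mathrm{Tr}(ux)$ for some $u\in\mathbb{F}_{q^n}$. Expanding the trace gives $f(x)=\sum_{i=0}^{n-1}(ux)^{q^i}=\sum_{i=0}^{n-1}u^{q^i}x^{q^i}$, which is visibly a $q$-polynomial, so nothing further is needed here.

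For the ``only if'' direction, I would first put $f$ in canonical form. Regarding $f$ as a function, I reduce modulo $x^{q^n}-x$ and write $f(x)=\sum_{i=0}^{n-1}a_ix^{q^i}$ with $a_i\in\mathbb{F}_{q^n}$. The hypothesis that $f$ takes values in $\mathbb{F}_q=\{t\in\mathbb{F}_{q^n}:t^q=t\}$ is equivalent to the functional identity $f(x)^q=f(x)$ for all $x\in\mathbb{F}_{q^n}$. Raising to the $q$-th power and using $x^{q^n}=x$ to reduce exponents modulo $n$, I get $f(x)^q=\sum_{j=0}^{n-1}a_{j-1}^{\,q}x^{q^j}$, where indices are read modulo $n$ (so $a_{-1}=a_{n-1}$). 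Hence $f(x)^q-f(x)=\sum_{j=0}^{n-1}\bigl(a_{j-1}^{\,q}-a_j\bigr)x^{q^j}$ vanishes identically on $\mathbb{F}_{q^n}$.

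The crucial step is the coefficient comparison. The displayed difference is a $q$-polynomial of degree at most $q^{n-1}<q^n$; since it vanishes on all $q^n$ elements of $\mathbb{F}_{q^n}$, it must be the zero polynomial, so $a_j=a_{j-1}^{\,q}$ for every $j$. (Equivalently, one may invoke the $\mathbb{F}_{q^n}$-linear independence of the Frobenius powers $x\mapsto x^{q^i}$, i.e.\ Artin's lemma on independence of automorphisms.) Setting $u:=a_0$, the recurrence unwinds to $a_i=u^{q^i}$ for all $i$, and the wrap-around relation $a_0=a_{n-1}^{\,q}=u^{q^n}=u$ holds automatically. Substituting back yields $f(x)=\sum_{i=0}^{n-1}u^{q^i}x^{q^i}=\mathrm{Tr}(ux)$, completing the proof. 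The only nonroutine point is this coefficient-matching step, and the degree bound $q^{n-1}<q^n$ on the reduced $q$-polynomial is exactly what makes it go through.
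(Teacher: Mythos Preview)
Your proof is correct. Note, however, that the paper does not give its own proof of this lemma: it is quoted from \cite{Yuan3} without argument. Your approach---writing the reduced $q$-polynomial as $\sum_{i=0}^{n-1}a_ix^{q^i}$, imposing $f(x)^q=f(x)$, and matching coefficients via the degree bound to obtain the cyclic recurrence $a_j=a_{j-1}^{\,q}$---is the standard and natural one, and all steps are justified.
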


\begin{theorem}
Assume that \(\mathrm{Tr}(H(x))\) is not a constant map. The cardinality \(|P_H|\) of the set \(P_H\) satisfies \(|P_H| \leq q^n-q^{n-1}\). This equality holds if and only if \(\mathrm{Tr}(H(x))\) is an \(\mathbb{F}_q\)-linear function. This means that there exists an element \(u\in\mathbb{F}_{q^n}\) such that \(\mathrm{Tr}(H(x))=\mathrm{Tr}(ux)\).
\end{theorem}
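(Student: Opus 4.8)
The plan is to reduce the statement to a sharp lower bound for the number of directions of $f:=\mathrm{Tr}\circ H:\mathbb{F}_{q^n}\to\mathbb{F}_q$, and then to pin down the extremal functions. First I would normalize: replacing $H$ by $H-c$ for a suitable constant $c$ we may assume $f(0)=0$, since this changes neither $P_H$ (one post-composes $x+\gamma f(x)$ with a translation) nor the direction set $D_f$. For $n\ge 2$ the map $f$ is non-injective, so $0\in D_f$ and $|P_H|=q^n+1-|D_f|$ by Remark 2.1 (the case $n=1$, where $f=H$ and $q^n-q^{n-1}=q-1$, reduces to R\'edei's theorem). Thus it suffices to prove that $|D_f|\ge q^{n-1}+1$, with equality exactly when $f(x)=\mathrm{Tr}(ux)$ for some $u\in\mathbb{F}_{q^n}$.

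For the bound I would treat the linearized case first. If $f$ is a linearized polynomial, then $D_f\setminus\{0\}=\{f(z)/z:z\in\mathbb{F}_{q^n}^*,\,f(z)\ne 0\}$, and the fibre of $z\mapsto f(z)/z$ over a nonzero value $d$ is $\ker(f-d\cdot\mathrm{id})\setminus\{0\}$; since $f$ has image in $\mathbb{F}_q$, the relation $f(z)=dz$ forces $z\in d^{-1}\mathrm{Im}(f)$, so this fibre has at most $|\mathrm{Im}(f)|-1\le q-1$ elements. As the fibres partition $\mathbb{F}_{q^n}^*\setminus\ker f$, a set of size $q^n-q^n/|\mathrm{Im}(f)|$, this yields $|D_f\setminus\{0\}|\ge q^n/|\mathrm{Im}(f)|\ge q^{n-1}$, i.e.\ $|D_f|\ge q^{n-1}+1$. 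For a general non-constant $f$, if $|D_f|\le q^{n-1}$ then $|D_f|\le(q^n+1)/2$, so Ball's Lemma 2.1 forces $f$ to be linearized, contradicting what was just shown; hence $|D_f|\ge q^{n-1}+1$ always, which is $|P_H|\le q^n-q^{n-1}$. (This half also follows directly from Theorem 2.1: for $b\ne c$ in $S$ and $x_0\in I_b$ the set $\{(y-x_0)/(b-c):y\in I_c\}$ lies in $\hat{P}_H$ and has size $|I_c|$, and choosing $b$ with $|I_b|$ not maximal gives $|\hat{P}_H|\ge\max_c|I_c|\ge q^n/|S|\ge q^{n-1}$.)

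For equality, suppose $|D_f|=q^{n-1}+1$. For $q\ge 3$ one has $q^{n-1}+1\le(q^n+1)/2$, so Lemma 2.1 again makes $f$ linearized, and then the fibre count must be tight: $f$ is onto $\mathbb{F}_q$ and $\ker(f-d\cdot\mathrm{id})=d^{-1}\mathbb{F}_q$ for every $d\in D_f\setminus\{0\}$, i.e.\ $f(d^{-1}\lambda)=\lambda$ for all $\lambda\in\mathbb{F}_q$. Since every nonzero value of $f$ occurs along such a line, I would deduce that $f(\mu x)=\mu f(x)$ whenever $f(x)\ne 0$, and then that $\ker f$ is $\mathbb{F}_q$-invariant, so $f(\mu x)=\mu f(x)$ holds for all $x$ and all $\mu\in\mathbb{F}_q$; hence $f$ is a $q$-polynomial, and Lemma 2.2 gives $f(x)=\mathrm{Tr}(ux)$. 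The converse is a one-line check: if $f(x)=\mathrm{Tr}(ux)$ with $u\ne 0$, then $x+\gamma f(x)$ is the $\mathbb{F}_q$-linear map $x\mapsto x+\gamma\,\mathrm{Tr}(ux)$, which permutes $\mathbb{F}_{q^n}$ exactly when $\mathrm{Tr}(u\gamma)\ne -1$, so $|P_H|=q^n-q^{n-1}$.

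The step I expect to be the main obstacle is the equality direction in the borderline case $q=2$: there $q^{n-1}+1$ exceeds $(q^n+1)/2$, so Lemma 2.1 is unavailable at the threshold needed, and one has to argue separately that a non-constant Boolean function $f:\mathbb{F}_{2^n}\to\mathbb{F}_2$ with exactly $2^{n-1}+1$ directions is a linear form. Here $|D_f|=|I_0+I_1|+1$, so the task becomes: show that $|I_0+I_1|=2^{n-1}$ together with $I_0\sqcup I_1=\mathbb{F}_{2^n}$ forces the level set $I_0$ (which contains $0$) to be a linear hyperplane — a Kneser-type statement about the stabiliser of the sumset $I_0+I_1$. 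More generally, the genuinely non-routine point of the whole argument is the passage from combinatorial extremality of $P_H$ to algebraic linearity of $\mathrm{Tr}(H(x))$, and that is exactly where Lemma 2.1 and Lemma 2.2 are needed.
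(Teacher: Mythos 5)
Your upper-bound argument is sound (the parenthetical version via Theorem~2.1 is essentially the paper's own pigeonhole proof), and your treatment of the equality case for $q\ge 3$ is correct: Ball's lemma forces $f=\mathrm{Tr}\circ H$ to be linearized, the fibre count over nonzero directions $d$ is tight only if $\ker(f-d\cdot\mathrm{id})=d^{-1}\mathbb{F}_q$, and the bootstrap from ``$f(\mu x)=\mu f(x)$ off the kernel'' to $\mathbb{F}_q$-invariance of $\ker f$ does upgrade $\mathbb{F}_p$-linearity to $\mathbb{F}_q$-linearity, after which Lemma~2.2 applies. The genuine gap is the one you name yourself: for $q=2$ the equality threshold is $|D_f|=2^{n-1}+1>(2^n+1)/2$, so Lemma~2.1 is unavailable exactly where you need it, and the ``only if'' direction of the theorem is unproved in that case. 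Since the statement does not exclude $q=2$, the proof is incomplete as written. The missing step is, however, short and is precisely the stabiliser computation you anticipate: equality forces $|I_0|=|I_1|=2^{n-1}$ and $\hat P_H=I_0+I_1$ of size $2^{n-1}$; normalising $0\in I_0$ gives $I_1\subseteq I_0+I_1$ and hence $I_0+I_1=I_1$ by cardinality, so $\langle I_0\rangle+I_1=I_1$, forcing $|\langle I_0\rangle|\le 2^{n-1}=|I_0|$; thus $I_0=\langle I_0\rangle$ is an index-$2$ subgroup, i.e.\ an $\mathbb{F}_2$-hyperplane $\{x:\mathrm{Tr}(ux)=0\}$, and $f(x)=\mathrm{Tr}(ux)$.

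It is worth noting that this repair is exactly the mechanism of the paper's proof, which is uniform in $q$ and never invokes Ball's theorem for this statement: from $|I_c-I_0|=|I_c|=q^{n-1}$ one gets $I_c=I_c+\langle I_0\rangle$, so $I_0$ is a subgroup and every $I_c$ is a coset $g_c+I_0$; the identity of all the sets $P_H(b,c)$ then pins the coset representatives down to an affine-in-$c$ form, from which $\mathbb{F}_q$-linearity of $\mathrm{Tr}(H(x))$ is read off directly, and Lemma~2.2 finishes. Your route buys a cleaner conceptual picture (everything is a statement about $|D_f|$, and the linearized case is handled by a transparent fibre count), at the cost of importing Ball's deep theorem and thereby creating the $q=2$ boundary problem; the paper's sumset argument is more elementary and avoids the case split entirely. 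Reserving Ball's lemma for the stronger stability statement (Theorem~2.9, where the hypothesis $|P_H|\ge(q^n+1)/2$ is weaker than extremality) and proving Theorem~2.8 combinatorially is the cleaner division of labour.
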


\begin{proof}
First, we define
\[P_H(b,c)=\left\{\frac{y - x}{b - c}:x\in I_b,y\in I_c\right\}=\frac{1}{b - c}(I_c - I_b).\]
The number of distinct values that \(\mathrm{Tr}(H(x))\) can take in \(\mathbb{F}_{q^n}\) is at most \(q\), so \(|S|\leq q\). By the pigeonhole principle, among the sets \(I_b\) (\(b\in S\)), there exists at least one \(b_0\in S\) such that \(|I_{b_0}|\geq q^{n - 1}\). This means that
\[\bigg|\bigcup_{\substack{b,c\in S\\b\neq c}}P_H(b,c)\bigg|\geq|P_H(b_0,c)|\geq q^{n - 1}.\]
Since the set \(P_H\) is the complement of \(\bigcup_{\substack{b,c\in S\\b\neq c}}P_H(b,c)\) in \(\mathbb{F}_{q^n}\), we have \[|P_H|=|\mathbb{F}_{q^n}|-\bigg|\bigcup_{\substack{b,c\in S\\b\neq c}}P_H(b,c)\bigg|\leq q^n - q^{n - 1}.\]

As outlined above, there exists a set \(P_H(b_0,c)\) whose number of elements is at least \(q^{n - 1}\). Thus if the number of elements in the set \(\bigcup_{\substack{b,c\in S\\b\neq c}}P_H(b,c)\) is to be \(q^{n - 1}\), then for any \(b\) and \(c\), the number of elements in both the set \(I_b\) and the set \(P_H(b,c)\) is exactly \(q^{n - 1}\). Moreover, for any \(b_1\), \(c_1\), \(b_2\) and \(c_2\), the set \(P_H(b_1,c_1)\) is completely identical to the set \(P_H(b_2,c_2)\). We will prove that when \(|P_H| = q^n - q^{n - 1}\), the function \(\mathrm{Tr}(H(x))\) must be an \(\mathbb{F}_q\)-linear function. Thus, by Lemma 2.2, there exists an element \(u\in\mathbb{F}_{q^n}\) such that \(\mathrm{Tr}(H(x))=\mathrm{Tr}(ux)\).

Without loss of generality, suppose that \(0\in I_0\). From this, we can obtain that \(I_c\subset I_c - I_0\). Also, since \(|I_c| = |P_H(0,c)| = |I_c - I_0| = q^{n - 1}\), it follows that \(I_c = I_c - I_0\). Furthermore, we have \(I_c = I_c+I_0 = I_c+\langle I_0\rangle\). From this, we can see that \(I_0=\langle I_0\rangle\), which indicates that \(I_0\) is a subgroup of order \(q^{n - 1}\), and \(I_c\) is a certain coset of \(I_0\), which can be expressed as \(I_c = g_c+I_0\), where \(g_c\) satisfies \(\mathrm{Tr}(H(g_c)) = c\).

At this point, for any \(b\) and \(c\), we have \(P_H(b,c)=\frac{I_c - I_b}{b-c}=\frac{g_c-g_b}{b- c}+I_0\). We need to show that for any \(b\neq c\), there exists a unique \(\lambda\) such that \(\frac{g_c-g_b}{b-c}=\lambda\). Then \(g_c=-\lambda c+g'_b\), where $g'_b=g_b+\lambda b$ and $\mathrm{Tr}(H(g'_b))=0$.

Assume that \(x_1\in I_{b_1}\) and \(x_2\in I_{b_2}\). For any \(k_1,k_2\in\mathbb{F}_q\), we have \(x_1 = -\lambda b_1+g'_{b_1}\) and \(x_2 = -\lambda b_2+g'_{b_2}\). Then \(k_1x_1 + k_2x_2=-\lambda(k_1b_1 + k_2b_2)+k_1g'_{b_1}+k_2g'_{b_2}\). Therefore,
\[\mathrm{Tr}(H(k_1x_1 + k_2x_2))=k_1b_1 + k_2b_2=k_1\mathrm{Tr}(H(x_1)) + k_2\mathrm{Tr}(H(x_2)).\]
This means that \(\mathrm{Tr}(H(x))\) is an \(\mathbb{F}_q\)-linear function. Thus, by Lemma 2.2, there exists an element \(u\in\mathbb{F}_{q^n}\) such that \(\mathrm{Tr}(H(x))=\mathrm{Tr}(ux)\).
\qed\end{proof}

The result of Theorem 2.8 shows that the maximum cardinality of the set \(P_H\) can reach \(q^n - q^{n - 1}\), and in this case, \(\mathrm{Tr}(H(x))\) must be an \(\mathbb{F}_q\)-linear function.

\begin{theorem} Let \(q\) be an odd prime number. If the cardinality of the set \(P_H\) satisfies \(|P_H| \geq \frac{q^n+1}{2}\), then \(\mathrm{Tr}(H(x))\) is an \(\mathbb{F}_q\)-linear function. That is, there exists an element \(u \in \mathbb{F}_{q^n}\) such that \(\mathrm{Tr}(H(x)) = \mathrm{Tr}(ux)\).
\end{theorem}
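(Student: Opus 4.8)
The plan is to turn the hypothesis into a bound on the number of directions of $\mathrm{Tr}(H(x))$ via Remark 2.1, and then invoke Ball's theorem (Lemma 2.1) together with Yuan's characterization of $q$-polynomials (Lemma 2.2). Recall from Remark 2.1 that $|D_{\mathrm{Tr}(H(x))}| + |P_H| = q^n + 1$. Hence the assumption $|P_H| \geq \tfrac{q^n+1}{2}$ is exactly equivalent to $|D_{\mathrm{Tr}(H(x))}| \leq \tfrac{q^n+1}{2}$. Since $q$ is prime we have $\mathbb{F}_{q^n} = \mathbb{F}_{p^n}$ with $p = q$, so Ball's bound, applied with its ``$q$'' taken to be $q^n$, will force $\mathrm{Tr}(H(x))$ (after a harmless normalization) to be a linearized polynomial; because the characteristic $p$ coincides with $q$, ``linearized polynomial'' is here the same notion as ``$q$-polynomial'', and Lemma 2.2 then delivers $u \in \mathbb{F}_{q^n}$ with $\mathrm{Tr}(H(x)) = \mathrm{Tr}(ux)$.

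In detail, I would first reduce to the case $\mathrm{Tr}(H(0)) = 0$. Writing $c_0 = \mathrm{Tr}(H(0))$ and choosing $H'$ with $\mathrm{Tr}(H'(x)) = \mathrm{Tr}(H(x)) - c_0$, the polynomial $x + \gamma(\mathrm{Tr}(H(x)) - c_0)$ is obtained from $x + \gamma\mathrm{Tr}(H(x))$ by the translation $z \mapsto z - \gamma c_0$, so $P_{H'} = P_H$; also the difference quotients of $\mathrm{Tr}(H(x))$ are unchanged by subtracting a constant, so $D_{\mathrm{Tr}(H'(x))} = D_{\mathrm{Tr}(H(x))}$. Thus we may assume $f := \mathrm{Tr}(H(x))$ satisfies $f(0) = 0$, maps $\mathbb{F}_{q^n}$ into $\mathbb{F}_q \subseteq \mathbb{F}_{q^n}$, and has $|D_f| \leq \tfrac{q^n+1}{2}$. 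Applying Lemma 2.1 over the field $\mathbb{F}_{q^n}$ (legitimate since $q^n = p^n$) shows that $f$ is a linearized polynomial; as $q = p$ is prime, a linearized polynomial over $\mathbb{F}_{q^n}$ is precisely a $q$-polynomial, so Lemma 2.2 yields $u \in \mathbb{F}_{q^n}$ with $f(x) = \mathrm{Tr}(ux)$, i.e. $\mathrm{Tr}(H(x)) = \mathrm{Tr}(ux)$, which is the desired conclusion.

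The only genuine subtlety — and the reason the hypothesis requires $q$ to be prime rather than just a prime power — is the step from ``linearized polynomial'' in Lemma 2.1 to ``$q$-polynomial'' in Lemma 2.2. An $\mathbb{F}_p$-linear map $\mathbb{F}_{q^n} \to \mathbb{F}_q$ need not be $\mathbb{F}_q$-linear when $q$ is a proper prime power: for instance $x \mapsto \mathrm{Tr}_{q^2/q}(x^p) = x^p + x^{pq}$ sends $\mathbb{F}_{q^2}$ into $\mathbb{F}_q$ and is $\mathbb{F}_p$-linear but not $\mathbb{F}_q$-linear, while determining at most $\tfrac{q^2+1}{2}$ directions for suitable $q$; when $q$ is prime this gap disappears since $\mathbb{F}_q = \mathbb{F}_p$. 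The secondary point is the normalization $\mathrm{Tr}(H(0)) = 0$: it is needed because $\mathrm{Tr}(ux) + c$ is honestly $\mathbb{F}_q$-linear only when $c = 0$, and it is harmless exactly as in the proof of Theorem 2.8, where one assumes $0 \in I_0$. Everything else is a routine transcription of Ball's inequality and Lemma 2.2.
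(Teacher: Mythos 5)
Your proposal is correct and follows exactly the paper's route: combine the identity $|D_{\mathrm{Tr}(H(x))}| + |P_H| = q^n+1$ from Remark 2.1 with Ball's theorem (Lemma 2.1) and Yuan's characterization (Lemma 2.2). In fact you are more careful than the paper, which silently skips both the normalization $f(0)=0$ required by Lemma 2.1 and the passage from ``$\mathbb{F}_p$-linearized'' to ``$q$-polynomial'' --- the very point where the hypothesis that $q$ is prime is used.
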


\begin{proof}
Let \( D_f \) be the direction set determined by the function \( f \). By Lemma 2.1, if \( |D_{\mathrm{Tr}(H(x))}| \leq \frac{q^n + 1}{2} \), then \( \mathrm{Tr}(H(x)) \) must be a linearized polynomial. Moreover, since \(|P_H| + |D_{\mathrm{Tr}(H(x))}| = |\mathbb{F}_{q^n}|+1\), it follows that \(\mathrm{Tr}(H(x))\) is a linear function whenever \(|P_H| \geq \frac{q^n+1}{2}\). This means that \(\mathrm{Tr}(H(x))\) is an \(\mathbb{F}_q\)-linear function. Thus, by Lemma 2.2, there exists an element \(u\in\mathbb{F}_{q^n}\) such that \(\mathrm{Tr}(H(x))=\mathrm{Tr}(ux)\).
\qed\end{proof}

Combining Theorems 2.8 and 2.9, we conclude that there does not exist a \(H(x)\) such that \(\frac{q^n+1}{2} \leq |P_H| < q^n - q^{n-1}\). In other words, the cardinality of the set \( P_H \) cannot take values within this range. A natural question thus arises: what is the second largest possible value of \( |P_H| \)? Therefore, we pose the following problem:

\begin{problem}
What is the maximum value of the cardinality of the set \( P_H \) that is smaller than \( q^n - q^{n-1} \)?
\end{problem}

In the following examples, we construct some special functions \( \mathrm{Tr}(H(x)) \) and determine the size of the corresponding sets \( P_H \). According to Lagrange interpolation, the existence of \(\mathrm{Tr}(H(x)) \) can be determined by its preimages.

\begin{example}
Let \( q = p \geq 5 \) be a prime. Then there exists a function \( \mathrm{Tr}(H(x)) \) whose values are uniformly distributed over \( \mathbb{F}_q \), and the corresponding set \( P_H = \{0\} \).
\end{example}

\begin{proof} We directly construct the preimage set of the function $\mathrm{Tr}(H(x))$ to illustrate the existence of such $\mathrm{Tr}(H(x))$. View \(\mathbb{F}_{q^n}\) as an \(n\)-dimensional vector space over \(\mathbb{F}_q\), and let \(\{\alpha_1, \dots, \alpha_n\}\) be a basis where \(\alpha_1 = 1\). Then
\[\mathbb{F}_{q^n}=\alpha_1\mathbb{F}_q+\alpha_2\mathbb{F}_q+\cdots+\alpha_{n-1}\mathbb{F}_q+\alpha_{n}\mathbb{F}_q.\]
Let
\[A_{n-1}=\alpha_1\mathbb{F}_q+\alpha_2\mathbb{F}_q+\cdots+\alpha_{n-1}\mathbb{F}_q,\]
we thus have
\[\mathbb{F}_{q^n}=\bigcup_{x\in \mathbb{F}_{q}}\{x\alpha_n+A_{n-1}\}.\]
Recall that $I_b = \{ x \in \mathbb{F}_{q^n} : \mathrm{Tr}(H(x)) = b \}$. We construct the preimage set of $\text{Tr}(H(x))$ in the following manner:
\[\begin{split}
I_{c_i}&=b_i\alpha_n+\left(A_{n-1}\setminus\{1\}\right)\cup\{b_{i+1}\alpha_n+1\},~0\leq i\leq q-1,
\end{split}\]
where $c_0=b_0=b_q=0$, $c_i\in \mathbb{F}_q$ for $0\leq i\leq q-1$, $b_j\in \mathbb{F}_q$ for $0\leq j\leq q-1$, and \(c_i \neq c_j\) and \(b_i \neq b_j\) whenever \(i \neq j\). It is evident that for any \( c_i \neq c_j \), we have \( |I_{c_i}| = |I_{c_j}| = q^{n-1} \). This implies that the preimages of \( \mathrm{Tr}(H(x)) \) constructed in this manner are uniformly distributed over \( \mathbb{F}_q \). Then
\[\begin{split}
\frac{I_{c_i}-I_{c_j}}{c_j-c_i}=&\left\{\frac{b_i-b_j}{c_j-c_i}\alpha_n+A_{n-1}\right\}\bigcup \left\{\frac{b_i-b_{j+1}}{c_j-c_i}\alpha_n+\left(A_{n-1}\setminus\{0\}\right)\right\}\\
&\bigcup \left\{\frac{b_{i+1}-b_{j}}{c_j-c_i}\alpha_n+\left(A_{n-1}\setminus\{0\}\right)\right\}\bigcup \left\{\frac{b_{i+1}-b_{j+1}}{c_j-c_i}\alpha_n\right\},
\end{split}\]
where $1\leq i,j\leq q-1$ and $i\neq j$. If we set $b_i=-c_i,1\leq i\leq q-1,$ then
\[\begin{split}
\frac{I_{c_i}-I_{c_j}}{c_j-c_i}=&\left\{\alpha_n+A_{n-1}\right\}\bigcup \left\{\frac{c_{j+1}-c_i}{c_j-c_i}\alpha_n+\left(A_{n-1}\setminus\{0\}\right)\right\}\\
&\bigcup \left\{\frac{c_{j}-c_{i+1}}{c_j-c_i}\alpha_n+\left(A_{n-1}\setminus\{0\}\right)\right\}\bigcup \left\{\frac{c_{j+1}-c_{i+1}}{c_j-c_i}\alpha_n\right\},
\end{split}\]
where $1\leq i,j\leq q-1$ and $i\neq j$. When $j=0$, we obtain
\[\begin{split}
\frac{I_{c_i}-I_{c_0}}{c_0-c_i}=&\left\{\alpha_n+A_{n-1}\right\}\bigcup \left\{\left(1-\frac{c_1}{c_i}\right)\alpha_n+\left(A_{n-1}\setminus\{0\}\right)\right\}\\
&\bigcup \left\{\frac{c_{i+1}}{c_i}\alpha_n+\left(A_{n-1}\setminus\{0\}\right)\right\}\bigcup \left\{\frac{c_{i+1}-c_{1}}{c_i}\alpha_n\right\}.
\end{split}\]
Since \(1 \leq i \leq q-1\), the expression \(1 - \dfrac{c_1}{c_i}\) can take \(q-1\) distinct values. Moreover, as \(c_1 \neq 0\), it follows that \(1 - \dfrac{c_1}{c_i} \neq 1\). Therefore, \(1 - \dfrac{c_1}{c_i}\) ranges over the set \(\mathbb{F}_q \setminus \{1\}\). Consequently, \(\left(1-\frac{c_1}{c_i}\right)\alpha_n+\left(A_{n-1}\setminus\{0\}\right)\)
ranges over the set $\bigcup_{x\in \mathbb{F}_q \setminus \{1\}}\{x\alpha_n+\left(A_{n-1}\setminus\{0\}\right)\}$. We thus have
\[\bigcup_{1\leq i\leq q-1}\frac{I_{c_i}-I_{c_0}}{c_0-c_i}\supset\bigcup_{x\in\mathbb{F}_q}\{x\alpha_n+\left(A_{n-1}\setminus\{0\}\right)\}\bigcup\{\alpha_n\}=F_{q^n}\setminus\bigcup_{x\in\mathbb{F}_q\setminus \{1\}}\{x\alpha_n\}.\]
From the inclusion
\[\bigcup_{\substack{1\leq i,j\leq q-1\\i\neq j}}\frac{I_{c_i}-I_{c_j}}{c_j-c_i}\supset\bigcup_{1\leq i\leq q-1}\frac{I_{c_i}-I_{c_0}}{c_0-c_i}\bigcup_{\substack{1\leq i,j\leq q-1\\i\neq j}}\left\{\frac{c_{j+1}-c_{i+1}}{c_j-c_i}\alpha_n\right\},\]
if \(\frac{c_{j+1}-c_{i+1}}{c_j-c_i}\) ranges over \(\mathbb{F}_q \setminus \{1\}\), then \(\frac{I_{c_i}-I_{c_j}}{c_j-c_i}\) ranges over \(\mathbb{F}_{q^n}\setminus\{0\}\).

When considering the case where \( q = p \geq 5 \), we define \( c_i = i \) for \( 0 \leq i \leq p-3 \), and set \( c_{p-2} = p-1 \), \( c_{p-1} = p-2 \). It follows that
\[\begin{split}
\bigcup_{\substack{1\leq i,j\leq q-1\\i\neq j}}\left\{\frac{c_{j+1}-c_{i+1}}{c_j-c_i}\right\}=&\bigcup_{3\leq i<j\leq q-1}\left\{\frac{c_{p-i}-c_{p-j}}{c_{p-i-1}-c_{p-j-1}}\right\}\bigcup_{3\leq j\leq q-1}\left\{\frac{c_{p-2}-c_{p-j}}{c_{p-3}-c_{p-j-1}}\right\}\\
&\bigcup\left\{\frac{c_{p-1}-c_{p-2}}{c_{p-2}-c_{p-3}}\right\}\bigcup_{3\leq j\leq q-1}\left\{\frac{c_{p-1}-c_{p-j}}{c_{p-2}-c_{p-j-1}}\right\}\\
=&\bigcup\left\{1\right\}\bigcup_{3\leq j\leq q-1}\left\{1+\frac{1}{j-2}\right\}\bigcup\left\{\frac{p-1}{2}\right\}\bigcup_{3\leq j\leq q-1}\left\{1-\frac{2}{j}\right\}.
\end{split}\]
It is easy to see that \( \bigcup_{3\leq j\leq q-1}\left\{1+\frac{1}{j-2}\right\} = \mathbb{F}_p \setminus \left\{0, 1, \frac{p+1}{2}\right\} \) and \(\bigcup_{3\leq j\leq q-1}\left\{1-\frac{2}{j}\right\} = \mathbb{F}_p \setminus \{0, 1, p-1\} \). Hence, \( \bigcup_{\substack{1\leq i,j\leq q-1\\i\neq j}}\left\{\frac{c_{j+1}-c_{i+1}}{c_j-c_i}\right\} = \mathbb{F}_p \setminus \{0\} \). This implies that \( \bigcup_{\substack{1\leq i,j\leq q-1\\i\neq j}}\frac{I_{c_i}-I_{c_j}}{c_j-c_i} = \mathbb{F}_{p^n} \setminus \{0\} \). Therefore, by the definition of \( P_H \), we have \( P_H = \{0\} \).
\qed\end{proof}

\begin{example}
There exists a function \( \mathrm{Tr}(H(x)) \) that takes only two values, and the cardinality of the corresponding set \( P_H \) is \( q^t \), where \( 0 \leq t \leq n \).
\end{example}

\begin{proof}
Let \(I_b\) and \(I_c\) be two disjoint preimage sets of \(\mathrm{Tr}(H(x))\), where \(I_b\) is assumed to be an additive subgroup of \(\mathbb{F}_{q^n}\) of order \(q^t,0\leq t\leq n\). Then we obtain \(I_c = \mathbb{F}_{q^n} \setminus I_b\) and \(I_b + I_c = I_c\). Furthermore, we have
\[\left|\frac{I_c-I_b}{b-c}\right|=|I_c-I_b|=|I_c+I_b|=|I_c|=q^n-q^{t}.\]
This implies that $|P_H|=q^t$, $0 \leq t \leq n$.
\qed\end{proof}

\section{Permutation polynomials of the form $x+\gamma \mathrm{Tr}(H(x))$}

In this section, we study two classes of functions $H(x)$ over \(\mathbb{F}_{q^2}\) and determine the corresponding sets $P_H$. The sizes of these sets $P_H$ are all relatively small, even only including the trivial case.

We need some knowledge of permutation polynomials in several indeterminates. Let $n\geq1$ and let $\mathbb{F}_q[x_1,\ldots,x_n]$ be the ring of polynomials in $n$ indeterminates over $\mathbb{F}_q$.

\begin{definition}[\hspace{-0.01em}\cite{Lidl-Niederreiter1}]
A polynomial $f\in\mathbb{F}_q[x_1,\ldots,x_n]$ is called a permutation polynomial in $n$ indeterminates over $\mathbb{F}_q$ if the equation $f(x_1,\ldots,x_n)=a$ has $q^{n - 1}$ solutions in $\mathbb{F}_q^n$ for each $a\in\mathbb{F}_q$.
\end{definition}

\begin{definition}[\hspace{-0.01em}\cite{Lidl-Niederreiter1}]
A system of polynomials
\[f_1,\ldots,f_m\in\mathbb{F}_q[x_1,\ldots,x_n],\quad 1\leq m\leq n,\]
is said to be {\it orthogonal} in $\mathbb{F}_q$ if the system of equations
\[f_1(x_1,\ldots,x_n)=a_1,\ldots,f_m(x_1,\ldots,x_n)=a_m\]
has $q^{n - m}$ solutions in $\mathbb{F}_q^n$ for each $(a_1,\ldots,a_m)\in\mathbb{F}_q^m$.
\end{definition}

In the special case $m = n$ this means that the orthogonal system $f_1,\ldots,f_n$ induces a permutation of $\mathbb{F}_q^n$. We could as well say that $f$ is a permutation polynomial if $f$ alone forms an orthogonal system.

\begin{theorem}[{\bf Hermite's Criterion},\hspace{-0.01em}\cite{Lidl-Niederreiter1}]
Let $\mathbb{F}_q$ be of characteristic $p$. Then the system $f_1,\ldots,f_n \in \mathbb{F}_q[x_1,\ldots,x_n]$ is orthogonal in $\mathbb{F}_q$ if and only if the following two conditions are satisfied:

(i) in the reduction of
\[f_1^{q - 1}\cdots f_n^{q - 1}\bmod(x_1^q - x_1,\ldots,x_n^q - x_n)\]
the coefficient of $x_1^{q - 1}\cdots x_n^{q - 1}$ is $\neq 0$;

(ii) in the reduction of
\[f_1^{t_1}\cdots f_n^{t_n}\bmod(x_1^q - x_1,\ldots,x_n^q - x_n)\]
the coefficient of $x_1^{q - 1}\cdots x_n^{q - 1}$ is $0$ whenever $t_1,\ldots,t_n$ are integers with $0\leq t_i\leq q - 1$ for $1\leq i\leq n$, not all $t_i = q - 1$, and at least one $t_i\not\equiv 0\bmod p$.
\end{theorem}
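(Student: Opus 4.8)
The plan is to reduce orthogonality to a point count and then evaluate that count by the standard summation trick over $\mathbb{F}_q$. By Definition 3.2 with $m=n$, the system $f_1,\dots,f_n$ is orthogonal precisely when, for every $a=(a_1,\dots,a_n)\in\mathbb{F}_q^n$, the number $N(a)$ of $x\in\mathbb{F}_q^n$ with $f_i(x)=a_i$ for all $i$ is $1$; since the $N(a)$ are non\,negative integers with $\sum_a N(a)=q^n$, this is equivalent to $N(a)\ge 1$ for every $a$. Working in $\mathbb{F}_q$ and using that $1-c^{q-1}$ equals $1$ for $c=0$ and $0$ otherwise, one has
\[N(a)\equiv\sum_{x\in\mathbb{F}_q^n}\prod_{i=1}^n\bigl(1-(f_i(x)-a_i)^{q-1}\bigr)\pmod{p}.\]

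The two elementary facts that drive the computation are: $\sum_{c\in\mathbb{F}_q}c^{e}=-1$ when $(q-1)\mid e$ and $e>0$, and $0$ otherwise; and, as a consequence, $\sum_{x\in\mathbb{F}_q^n}g(x)$ equals $(-1)^n$ times the coefficient of $x_1^{q-1}\cdots x_n^{q-1}$ in the reduction of $g$ modulo $(x_1^q-x_1,\dots,x_n^q-x_n)$, for any $g\in\mathbb{F}_q[x_1,\dots,x_n]$. The heart of the argument is the moment computation: for each $t=(t_1,\dots,t_n)$ with $0\le t_i\le q-1$, interchanging summations and using the coordinatewise identity $\sum_{a_i\in\mathbb{F}_q}a_i^{t_i}\bigl(1-(f_i(x)-a_i)^{q-1}\bigr)=f_i(x)^{t_i}$ gives
\[\sum_{a\in\mathbb{F}_q^n}a_1^{t_1}\cdots a_n^{t_n}\,N(a)=\sum_{x\in\mathbb{F}_q^n}f_1(x)^{t_1}\cdots f_n(x)^{t_n},\]
which by the previous fact equals $(-1)^n$ times the coefficient of $x_1^{q-1}\cdots x_n^{q-1}$ in the reduction of $f_1^{t_1}\cdots f_n^{t_n}$. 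Subtracting off the moments of the constant function $1$ (which are $(-1)^n$ when $t=(q-1,\dots,q-1)$ and $0$ otherwise) yields the master identity: in $\mathbb{F}_q$, $\sum_a a_1^{t_1}\cdots a_n^{t_n}\bigl(N(a)-1\bigr)$ equals $(-1)^n$ times the coefficient of $x_1^{q-1}\cdots x_n^{q-1}$ in the reduction of $f_1^{t_1}\cdots f_n^{t_n}$, diminished by $(-1)^n$ exactly when $t=(q-1,\dots,q-1)$.

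To close the argument I would use that the pairing $(x^s,x^t)\mapsto\sum_a a^{s+t}$ on reduced polynomials is non\,degenerate, so any function $\mathbb{F}_q^n\to\mathbb{F}_q$ whose moments $\sum_a a^t\phi(a)$ vanish for all $t\ne(q-1,\dots,q-1)$ is constant. Condition (ii) forces these moments of $N(\cdot)-1$ to vanish for every $t$ having an entry $\not\equiv 0\pmod{p}$; to cover the remaining $t\ne(q-1,\dots,q-1)$, whose entries are all divisible by $p$, apply the Frobenius identity $\sum_x g(x)^p=\bigl(\sum_x g(x)\bigr)^p$ to pass from the moment at $t=pt'$ to the one at $t'$, iterating until some entry is not divisible by $p$ (or $t=0$, where $q^n\equiv 0$ finishes it). Hence, under (ii), $N(a)\equiv 1+k\pmod{p}$ for a single $k\in\{0,\dots,p-1\}$, and the case $t=(q-1,\dots,q-1)$ of the master identity says that the coefficient of $x_1^{q-1}\cdots x_n^{q-1}$ in the reduction of $f_1^{q-1}\cdots f_n^{q-1}$ equals $k+1$. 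If $1\le k\le p-2$ then every $N(a)\ge 2$, so $\sum_a N(a)\ge 2q^n>q^n$, impossible; if $k=p-1$ then that coefficient is $0$, contradicting (i). Therefore $k=0$, so $N(a)\ge 1$ for all $a$, hence $N(a)=1$ for all $a$ and the system is orthogonal. Conversely, if the system is orthogonal then $N(\cdot)-1\equiv 0$, so all its moments vanish, which by the master identity is precisely the conjunction of (i) (the case $t=(q-1,\dots,q-1)$) and (ii) (the remaining cases).

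The step I expect to be the main obstacle is this last paragraph: reconciling the purely mod\,$p$ coefficient conditions with the genuine integer relation $\sum_a N(a)=q^n$, and, within that, correctly disposing of the exponent vectors all of whose entries are divisible by $p$ — on which condition (ii) is silent — through the Frobenius collapse, together with verifying that the moment\,to\,coefficient correspondence really is a bijection.
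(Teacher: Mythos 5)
The paper does not prove this statement: it is quoted verbatim as a classical result and attributed to Lidl--Niederreiter, so there is no in-paper argument to compare against. Your proof is correct and is essentially the standard textbook derivation: the indicator identity $N(a)\equiv\sum_{x}\prod_i\bigl(1-(f_i(x)-a_i)^{q-1}\bigr)\pmod p$, the moment identity $\sum_a a^t N(a)=\sum_x f_1(x)^{t_1}\cdots f_n(x)^{t_n}$, and the translation of power sums into the coefficient of $x_1^{q-1}\cdots x_n^{q-1}$ after reduction. The two points you flagged as potential obstacles are in fact handled correctly: the Frobenius collapse $\sum_a a^{pt'}\phi(a)=\bigl(\sum_a a^{t'}\phi(a)\bigr)^p$ (valid because $\phi=N-1$ takes values in $\mathbb{F}_p$) legitimately disposes of the exponent vectors all of whose entries are divisible by $p$, terminating either at a vector with an entry $\not\equiv 0\pmod p$ (covered by (ii), and never equal to $(q-1,\ldots,q-1)$ since $p(q-1)>q-1$) or at $t=0$; and the non-degeneracy of the evaluation pairing does force $N(\cdot)-1$ to be the constant $k$, after which your case analysis on $k$ (counting for $1\le k\le p-2$, condition (i) for $k=p-1$) closes the forward implication. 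The converse is immediate from the master identity, as you say.
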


Recall that $\mathbb{F}_{q^2}$ is a two-dimensional vector space over $\mathbb{F}_q$. Let $u$ be a quadratic non-residue and suppose $u = \alpha^2$. Then the set $\{1, \alpha\}$ forms a basis for $\mathbb{F}_{q^2}$ over $\mathbb{F}_q$. For any $x \in \mathbb{F}_{q^2}$ and $\gamma \in \mathbb{F}_{q^2}$, we can express $x = x_1 + x_2\alpha$ and $\gamma = a_1 + a_2\alpha$. It should be notice that $\alpha^q=-\alpha$ by the definition of $\alpha$ and thus $\mathrm{Tr}(\alpha) = 0$. In each proof of the following theorems, we will consistently use this basis. Let $\text{char}(\mathbb{F}_{q^2})$ be the characteristic of $\mathbb{F}_{q^2}$.

\subsection{The case $x+\gamma \mathrm{Tr}(x^k)$}

\begin{theorem}
Let \(q\) be an odd prime power. The polynomial \(f(x)=x+\gamma \mathrm{Tr}(x^2)\) over \(\mathbb{F}_{q^2}\) is a permutation polynomial if and only if \(\gamma = 0\). This implies that the set \(P_H = \{0\}\).
\end{theorem}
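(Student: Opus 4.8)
The plan is to use the $\mathbb{F}_q$-basis $\{1,\alpha\}$ of $\mathbb{F}_{q^2}$ fixed before the theorem (so $\alpha^2=u$ is a non-square, $\alpha^q=-\alpha$, and $\mathrm{Tr}(c+d\alpha)=2c$ for $c,d\in\mathbb{F}_q$) and to turn the statement into a collision question for an explicit pair of quadratics over $\mathbb{F}_q$. Writing $x=x_1+x_2\alpha$, $\gamma=a_1+a_2\alpha$ with $x_i,a_i\in\mathbb{F}_q$, one has $x^2=(x_1^2+ux_2^2)+2x_1x_2\alpha$, hence $\mathrm{Tr}(x^2)=2(x_1^2+ux_2^2)\in\mathbb{F}_q$, and so
\[f(x)=\bigl[x_1+2a_1(x_1^2+ux_2^2)\bigr]+\bigl[x_2+2a_2(x_1^2+ux_2^2)\bigr]\alpha.\]
Thus $f$ is a permutation polynomial of $\mathbb{F}_{q^2}$ iff the map $\Phi:(x_1,x_2)\mapsto(f_1,f_2)$ on $\mathbb{F}_q^2$ with $f_1=x_1+2a_1(x_1^2+ux_2^2)$, $f_2=x_2+2a_2(x_1^2+ux_2^2)$ is a bijection. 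As $\gamma=0$ gives $f(x)=x$, which is trivially a permutation, it remains to show $\Phi$ has a collision whenever $(a_1,a_2)\neq(0,0)$.

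To that end I would write out the collision equations. For distinct $(x_1,x_2),(y_1,y_2)$ set $Q=(x_1^2-y_1^2)+u(x_2^2-y_2^2)$; then $\Phi(x)=\Phi(y)$ is equivalent to $x_1-y_1=-2a_1Q$ and $x_2-y_2=-2a_2Q$. If $Q=0$ these force $x=y$, so a genuine collision needs $Q\neq0$; substituting $x_i-y_i=-2a_iQ$ into $Q=(x_1-y_1)(x_1+y_1)+u(x_2-y_2)(x_2+y_2)$ and cancelling $Q$ leaves the single scalar condition
\[-2a_1(x_1+y_1)-2a_2u(x_2+y_2)=1.\]
The crucial observation is that this is reversible: since $\mathrm{char}(\mathbb{F}_q)$ is odd, one may freely prescribe $s=x_1+y_1$, $t=x_2+y_2$ and any $Q\in\mathbb{F}_q^*$, solve $x_1,y_1$ from $\{x_1+y_1=s,\ x_1-y_1=-2a_1Q\}$ and $x_2,y_2$ from $\{x_2+y_2=t,\ x_2-y_2=-2a_2Q\}$, and then the recomputed value of $Q$ equals $Q\bigl(-2a_1s-2a_2ut\bigr)$, which is $Q$ exactly when $(s,t)$ solves the scalar condition — in which case $\Phi(x)=\Phi(y)$. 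Because $u\neq0$ and $(a_1,a_2)\neq(0,0)$, the pair $(-2a_1,-2a_2u)$ is nonzero, so that affine equation has a solution $(s,t)$; choosing any $Q\in\mathbb{F}_q^*$ then gives $(x_1-y_1,x_2-y_2)=(-2a_1Q,-2a_2Q)\neq(0,0)$, so the two points are distinct. This yields the desired collision, so $\gamma\notin P_H$ for every $\gamma\neq0$, and with $0\in P_H$ we conclude $P_H=\{0\}$.

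I do not expect a serious obstacle; the argument is essentially bookkeeping that decouples the four unknowns into ``two free sums $s,t$, one free nonzero $Q$, and one affine constraint''. The mild points to watch are: the identity $\mathrm{Tr}(\alpha)=0$, used to get the tidy form of $\mathrm{Tr}(x^2)$; the fact that $Q=0$ gives no collision, so one avoids a further quadratic constraint; the invertibility of $2$ and $u$ in odd characteristic, which both lets one solve for $x_i,y_i$ from $s,t,Q$ and makes the affine equation non-degenerate precisely when $\gamma\neq0$; and the check that the reconstructed points are genuinely distinct. An alternative is to apply Hermite's criterion (Theorem 3.1) to the system $f_1,f_2$ and show some coefficient forbidden by condition (i) or (ii) is nonzero when $\gamma\neq0$, but the direct collision argument above seems shorter and more transparent.
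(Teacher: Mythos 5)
Your argument is correct, and it takes a genuinely different route from the paper. After the common reduction to the pair $f_1=x_1+2a_1(x_1^2+ux_2^2)$, $f_2=x_2+2a_2(x_1^2+ux_2^2)$ (identical to the paper's decomposition in the basis $\{1,\alpha\}$), the paper proves non-permutation for $a_1a_2\neq 0$ via Hermite's criterion for orthogonal systems, computing the coefficient of $x_1^{q-1}x_2^{q-1}$ in $f_1^{(q-1)/2}f_2^{(q-1)/2}$ and showing it is $(-1)^{(q+1)/2}$ times a nonzero quantity, and then treats the boundary cases $a_1=0$ or $a_2=0$ separately by exhibiting explicit double points. You instead construct a collision directly and uniformly for every $\gamma\neq 0$: parametrizing a prospective colliding pair by the sums $s=x_1+y_1$, $t=x_2+y_2$ and the quantity $Q=(x_1^2-y_1^2)+u(x_2^2-y_2^2)$, you reduce the collision equations to the single affine condition $-2a_1s-2a_2ut=1$, which is solvable precisely because $(a_1,a_2)\neq(0,0)$, $2\neq 0$ and $u\neq 0$; the verification that the reconstructed $Q$ is consistent and that the two points are distinct is exactly as you state. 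Your proof is more elementary and self-contained (no appeal to Hermite's criterion or binomial coefficients mod $p$) and avoids the case split on whether $a_1a_2=0$; the paper's approach has the advantage of being the template reused for the higher-degree families in the rest of Section 3, where an explicit collision parametrization would be harder to set up.
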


\begin{proof}
Let \( x = x_1 + x_2\alpha \) and \( \gamma = a_1 + a_2\alpha \), with \( \mathrm{Tr}(\alpha) = 0 \) and \( \alpha^2 = u \), where \( u \) is a quadratic non-residue. We have
\[\begin{split}
f(x)&=f(x_1 + x_2\alpha)\\
&=x_1 + x_2\alpha+(a_1 + a_2\alpha)\mathrm{Tr}((x_1 + x_2\alpha)^2)\\
&=x_1 + x_2\alpha+(a_1 + a_2\alpha)\mathrm{Tr}(x_1^2 + ux_2^2+2\alpha x_1x_2)\\
&=x_1 + x_2\alpha+(a_1 + a_2\alpha)(2x_1^2 + 2ux_2^2)\\
&=(2a_1x_1^2+2a_1ux_2^2+x_1)+(2a_2x_1^2+2a_2ux_2^2+x_2)\alpha.
\end{split}\]
Let
\[f_1(x_1,x_2)= 2a_1x_1^2+2a_1ux_2^2+x_1,\quad f_2(x_1,x_2) = 2a_2x_1^2+2a_2ux_2^2+x_2.\]
Then
\[f(x)=f_1(x_1,x_2)+ f_2(x_1,x_2)\alpha.\]
Recall that \(f(x)\) is a permutation polynomial if and only if \(f_1(x_1, x_2)\) and \(f_2(x_1, x_2)\) are orthogonal.

We first consider the case where \(a_1a_2\neq0\). According to Theorem 3.1, if there exist \(0\leq t_1,t_2\leq q - 1\), where \(t_1\) and \(t_2\) are not both equal to \(q - 1\), such that the coefficient of \(x_1^{q - 1}x_2^{q - 1}\) is not equal to 0, then \(f(x)\) must not be a permutation polynomial. In fact, when \(t_1=t_2=\frac{q - 1}{2}\), we have
\[\begin{split}
f^{t_1}_1(x_1,x_2)f^{t_2}_2(x_1,x_2)&=f^{\frac{q - 1}{2}}_1(x_1,x_2)f^{\frac{q - 1}{2}}_2(x_1,x_2)\\
&=(2a_1x_1^2+2a_1ux_2^2+x_1)^{\frac{q - 1}{2}}(2a_2x_1^2+2a_2ux_2^2+x_2)^{\frac{q - 1}{2}}\\
&=2^{q-1}(a_1a_2)^{\frac{q - 1}{2}}\left(x_1^2+ux_2^2+\frac{x_1}{2a_1}\right)^{\frac{q - 1}{2}}\left(x_1^2+ux_2^2+\frac{x_2}{2a_2}\right)^{\frac{q - 1}{2}}\\
&=2^{q-1}(a_1a_2)^{\frac{q - 1}{2}}\left(x_1^2+ux_2^2\right)^{q - 1}+\text{the terms without}~x_1^{q - 1}x_2^{q - 1}\\
&=2^{q-1}(a_1a_2)^{\frac{q - 1}{2}}u^{\frac{q - 1}{2}}\binom{q-1}{\frac{q-1}{2}}x_1^{q - 1}x_2^{q - 1}+\text{the terms without}~x_1^{q - 1}x_2^{q - 1}.
\end{split}\]
Then the coefficient of \(x_1^{q - 1}x_2^{q - 1}\) is \(2^{q-1}(a_1a_2)^{\frac{q - 1}{2}}u^{\frac{q - 1}{2}}\binom{q-1}{\frac{q-1}{2}}\), and
\[u^{\frac{q - 1}{2}}\binom{q-1}{\frac{q-1}{2}} \equiv(-1)^{\frac{q+1}{2}}\pmod{q}.\]
Thus, when \(a_1a_2\neq0\), by Theorem 3.1, \(f_1(x_1, x_2)\) and \(f_2(x_1, x_2)\) are not orthogonal. Hence, \(f(x)\) is not a permutation polynomial.

When \(a_1 = a_2 = 0\), that is, \(\gamma = 0\), the function \(f(x)=x\) is a permutation polynomial. When either \(a_1 = 0\) or \(a_2 = 0\), \(f(x)\) is not a permutation polynomial. For instance, when \(a_1 = 0\) and \(a_2\neq0\), we get
\[f_1(x_1,x_2)=x_1,\quad f_2(x_1,x_2) = 2a_2x_1^2+2a_2ux_2^2+x_2.\]
The polynomial \( f(x) \) is a permutation polynomial if and only if the system of equations \( f_1(x_1,x_2)= r \) and \( f_2(x_1,x_2) = s \) has exactly one solution over \( \mathbb{F}_q^2 \) for each \( (r, s) \in \mathbb{F}_q^2 \). If \( a_2 \neq 0 \), then for any \( r\in \mathbb{F}_q\setminus\{0\} \), the system \( (f_1, f_2) = (r, 2a_2r^2) \) admits two solutions \( (x_1, x_2) = (r, 0) \) and \( \left(r, -\frac{1}{2a_2r}\right) \). Therefore, the polynomial \(f(x)=x+\gamma \mathrm{Tr}(x^2)\) over \(\mathbb{F}_{q^2}\) is a permutation polynomial if and only if \(\gamma = 0\).
\qed\end{proof}

\begin{theorem}
Let \(q\) be an odd prime power. The polynomial \(f(x)=x+\gamma \mathrm{Tr}\left(x^{q+1}\right)\) over \(\mathbb{F}_{q^2}\) is a permutation polynomial if and only if \(\gamma = 0\).
\end{theorem}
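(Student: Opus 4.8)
The plan is to first eliminate the trace. The key observation is that $x^{q+1}=x\cdot x^{q}$ is the relative norm $N_{\mathbb{F}_{q^{2}}/\mathbb{F}_{q}}(x)$, so $x^{q+1}\in\mathbb{F}_{q}$ for every $x\in\mathbb{F}_{q^{2}}$ (indeed $(x^{q+1})^{q}=x^{q^{2}+q}=x^{q+1}$). Hence, for each $x$ the value $b=x^{q+1}$ lies in $\mathbb{F}_{q}$ and $\mathrm{Tr}(x^{q+1})=b+b^{q}=2b=2x^{q+1}$, so the polynomial in question simplifies to $f(x)=x+2\gamma\,x^{q+1}$. When $\gamma=0$ this is $f(x)=x$, a permutation polynomial; so it remains to show that $f$ is never a permutation polynomial when $\gamma\neq0$, which will give $P_{H}=\{0\}$ for $H(x)=x^{q+1}$.

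For $\gamma\neq0$ the idea is to restrict $f$ to the $\mathbb{F}_{q}$-line $\gamma\mathbb{F}_{q}$ and observe that there it becomes a genuine quadratic. Since $(\gamma\lambda)^{q+1}=\gamma^{q+1}\lambda^{2}$ for $\lambda\in\mathbb{F}_{q}$, one computes
\[ f(\gamma\lambda)=\gamma\lambda+2\gamma\gamma^{q+1}\lambda^{2}=\gamma\bigl(2\gamma^{q+1}\lambda^{2}+\lambda\bigr),\qquad \lambda\in\mathbb{F}_{q}. \]
As $\mathrm{char}(\mathbb{F}_{q^{2}})$ is odd and $\gamma^{q+1}\neq0$, the polynomial $2\gamma^{q+1}\lambda^{2}+\lambda\in\mathbb{F}_{q}[\lambda]$ has degree $2$ and hence is not injective on $\mathbb{F}_{q}$; concretely it vanishes both at $\lambda=0$ and at $\lambda=-\tfrac{1}{2\gamma^{q+1}}$. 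Multiplying back by $\gamma$ yields $f\!\left(-\tfrac{\gamma}{2\gamma^{q+1}}\right)=0=f(0)$ with $-\tfrac{\gamma}{2\gamma^{q+1}}\neq0$, so $f$ is not injective on $\mathbb{F}_{q^{2}}$ and hence not a permutation polynomial. This proves the theorem.

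For completeness, the statement can also be reached along the lines of the proof of Theorem 3.2, writing $x=x_{1}+x_{2}\alpha$, $\gamma=a_{1}+a_{2}\alpha$ and using $x^{q+1}=x_{1}^{2}-ux_{2}^{2}\in\mathbb{F}_{q}$ to obtain $f_{1}=2a_{1}(x_{1}^{2}-ux_{2}^{2})+x_{1}$ and $f_{2}=2a_{2}(x_{1}^{2}-ux_{2}^{2})+x_{2}$: for $a_{1}a_{2}\neq0$ the choice $t_{1}=t_{2}=\tfrac{q-1}{2}$ in Theorem 3.1 produces the nonzero coefficient $2^{q-1}(a_{1}a_{2})^{(q-1)/2}\binom{q-1}{(q-1)/2}(-u)^{(q-1)/2}$ of $x_{1}^{q-1}x_{2}^{q-1}$ (only the degree-$2(q-1)$ part of $f_{1}^{(q-1)/2}f_{2}^{(q-1)/2}$ can contribute, and $\tfrac{q-1}{2}\equiv\tfrac{p-1}{2}\not\equiv0\pmod p$, so condition (ii) is violated), while for exactly one of $a_{1},a_{2}$ zero one reads off a $2$-element fibre directly as at the end of the proof of Theorem 3.2. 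The only genuinely new point is the observation that $x^{q+1}$ is $\mathbb{F}_{q}$-valued; once that is noticed there is no real obstacle, everything being a one-line computation, so I would present the first, norm-based argument.
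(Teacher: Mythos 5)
Your argument is correct, and it takes a genuinely different route from the paper. The paper proceeds coordinate-wise: it writes $x=x_1+x_2\alpha$, $\gamma=a_1+a_2\alpha$, computes $\mathrm{Tr}(x^{q+1})=2(x_1^2-ux_2^2)$, and then applies Hermite's criterion for orthogonal systems (Theorem 3.1) with $t_1=t_2=\frac{q-1}{2}$ to rule out $a_1a_2\neq 0$, handling the cases where exactly one of $a_1,a_2$ vanishes by exhibiting a two-element fibre. Your key observation --- that $x^{q+1}=N_{\mathbb{F}_{q^2}/\mathbb{F}_q}(x)$ already lies in $\mathbb{F}_q$, so that $\mathrm{Tr}(x^{q+1})=2x^{q+1}$ and $f(x)=x+2\gamma x^{q+1}$ --- lets you bypass both the coordinate decomposition and Hermite's criterion entirely: restricting to the line $\gamma\mathbb{F}_q$ turns $f$ into $\gamma(2\gamma^{q+1}\lambda^2+\lambda)$ with $\gamma^{q+1}\in\mathbb{F}_q^*$, a genuine quadratic over $\mathbb{F}_q$ in odd characteristic, and the explicit collision $f(0)=f\bigl(-\tfrac{1}{2\gamma^q}\bigr)=0$ finishes the proof uniformly for all $\gamma\neq 0$ with no case analysis. (I checked the collision: $\bigl(-\tfrac{1}{2\gamma^q}\bigr)^{q+1}=\tfrac{1}{4\gamma^{q+1}}$, so $f\bigl(-\tfrac{1}{2\gamma^q}\bigr)=-\tfrac{1}{2\gamma^q}+\tfrac{2\gamma}{4\gamma^{q+1}}=0$.) Your argument is shorter and more transparent; what the paper's method buys is uniformity with the other theorems of Section 3, where $\mathrm{Tr}(H(x))$ does not collapse so conveniently and the Hermite machinery is genuinely needed. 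Your sketch of the alternative, paper-style route is also accurate (including the sign $(-u)^{(q-1)/2}$ in the coefficient, which the paper writes slightly loosely as $u^{(q-1)/2}$; both are nonzero, so nothing changes).
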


\begin{proof}
Set \( x = x_1 + x_2\alpha \) and \( \gamma = a_1 + a_2\alpha \), with \( \mathrm{Tr}(\alpha) = 0 \) and \( \alpha^2 = u \). We have
\[\begin{split}
f(x)&=f(x_1 + x_2\alpha)\\
&=x_1 + x_2\alpha+(a_1 + a_2\alpha)\mathrm{Tr}\left((x_1 + x_2\alpha)^{q+1}\right)\\
&=x_1 + x_2\alpha+(a_1 + a_2\alpha)\mathrm{Tr}\left((x_1 - x_2\alpha)(x_1 + x_2\alpha)\right)\\
&=x_1 + x_2\alpha+(a_1 + a_2\alpha)\mathrm{Tr}\left(x_1^2 - x_2^2u\right)\\
&=x_1 + x_2\alpha+(a_1 + a_2\alpha)(2x_1^2 - 2x_2^2u)\\
&=(x_1+2a_1x_1^2-2a_1x_2^2u)+(x_2+2a_2x_1^2-2a_2x_2^2u)\alpha.
\end{split}\]
Let
\[f_1(x_1,x_2)=x_1+2a_1x_1^2-2a_1x_2^2u,\quad f_2(x_1,x_2)=x_2+2a_2x_1^2-2a_2x_2^2u.\]
Take $t_1=\frac{q-1}{2},t_2=\frac{q-1}{2}$, we have
\[\begin{split}
f^{t_1}_1(x_1,x_2)f^{t_2}_2(x_1,x_2)&=f^{\frac{q-1}{2}}_1(x_1,x_2)f^{\frac{q-1}{2}}_2(x_1,x_2)\\
&=(x_1+2a_1x_1^2-2a_1x_2^2u)^{\frac{q-1}{2}}(x_2+2a_2x_1^2-2a_2x_2^2u)^{\frac{q-1}{2}}\\
&=2^{q-1}(a_1a_2)^{\frac{q-1}{2}}(x_1^2-x_2^2u)^{q-1}+\text{the terms without}~x_1^{q - 1}x_2^{q - 1}\\
&=2^{q-1}(a_1a_2)^{\frac{q-1}{2}}\binom{q-1}{\frac{q-1}{2}}u^{\frac{q - 1}{2}}x_1^{q - 1}x_2^{q - 1}+\text{the terms without}~x_1^{q - 1}x_2^{q - 1}.\\
\end{split}\]
Thus, when \(a_1a_2\neq0\), by Theorem 3.1, \(f_1(x_1, x_2)\) and \(f_2(x_1, x_2)\) are not orthogonal. Hence, \(f(x)\) is not a permutation polynomial. When either \(a_1 = 0\) or \(a_2 = 0\), \(f(x)\) is not a permutation polynomial. For instance, when \(a_1 = 0\) and \(a_2\neq0\), we get
\[f_1(x_1,x_2)=x_1,\quad f_2(x_1,x_2) = x_2+2a_2x_1^2-2a_2x_2^2u.\]
If \( a_2 \neq 0 \), then for any \( r\in \mathbb{F}_q\setminus\{0\} \), the system \( (f_1, f_2) = (r, 2a_2r^2) \) admits two solutions \( (x_1, x_2) = (r, 0) \) and \( \left(r, \frac{1}{2a_2u}\right) \). Therefore, the polynomial \(f(x)=x+\gamma \mathrm{Tr}(x^{q+1})\) over \(\mathbb{F}_{q^2}\) is a permutation polynomial if and only if \(\gamma = 0\).
\qed\end{proof}

In 2016, Kyureghyan and Zieve \cite{Kyureghyan-Zieve} showed that when \( q = 7 \) and \( \gamma^4 = 1 \), the polynomial \(f(x)=x+\gamma \mathrm{Tr}\left(x^{q+3}\right)\) over \(\mathbb{F}_{q^2}\) is a permutation polynomial. Here, \( q = 7 \) satisfies \( q \equiv 3 \pmod{4} \). We proceed to prove the following result.

\begin{theorem}
Let \(q\) be an odd prime power such that \( q \equiv 1 \pmod{4} \). The polynomial \(f(x)=x+\gamma \mathrm{Tr}\left(x^{q+3}\right)\) over \(\mathbb{F}_{q^2}\) is a permutation polynomial if and only if \(\gamma = 0\).
\end{theorem}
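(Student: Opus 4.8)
The plan is to follow the template of the proofs of Theorems 3.2 and 3.3. Write $x = x_1 + x_2\alpha$ and $\gamma = a_1 + a_2\alpha$ with $\alpha^2 = u$ a quadratic non-residue and $\mathrm{Tr}(\alpha) = 0$. Since $x^q = x_1 - x_2\alpha$, we get $x^{q+1} = x_1^2 - ux_2^2 \in \mathbb{F}_q$ and $x^2 = (x_1^2 + ux_2^2) + 2x_1x_2\alpha$, hence $x^{q+3} = (x_1^4 - u^2x_2^4) + 2x_1x_2(x_1^2 - ux_2^2)\alpha$ and $\mathrm{Tr}(x^{q+3}) = 2(x_1^4 - u^2x_2^4)$. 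Thus $f(x) = f_1(x_1,x_2) + f_2(x_1,x_2)\alpha$ with $f_i(x_1,x_2) = x_i + 2a_i(x_1^4 - u^2x_2^4)$, and $f$ is a permutation polynomial if and only if $(f_1,f_2)$ is orthogonal over $\mathbb{F}_q$. The case $\gamma = 0$ is trivial, so it remains to rule out orthogonality whenever $(a_1,a_2)\neq(0,0)$; I would split into the cases $a_1a_2 = 0$ and $a_1a_2\neq 0$, applying Hermite's Criterion (Theorem 3.1) in each.

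In the degenerate case, say $a_1 = 0$, $a_2 \neq 0$ (the other being symmetric), $f_1 = x_1$ and $f_2 = x_2 + 2a_2x_1^4 - 2a_2u^2x_2^4$. I would test condition (ii) with $t_1 = q-1$ and $t_2 = (q-1)/4$, an integer precisely because $q\equiv 1\pmod 4$; note $t_1 \not\equiv 0 \pmod p$ and $(t_1,t_2)\neq(q-1,q-1)$. Since $f_1^{t_1}f_2^{t_2} = x_1^{q-1}f_2^{(q-1)/4}$, a short exponent count modulo $(x_1^q-x_1,x_2^q-x_2)$ (using $4\mid q-1$) shows that the only term whose reduction feeds the coefficient of $x_1^{q-1}x_2^{q-1}$ is the one obtained by picking $-2a_2u^2x_2^4$ from every factor of $f_2^{(q-1)/4}$, which contributes $(-2a_2u^2)^{(q-1)/4}\neq 0$. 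Hence condition (ii) is violated and $f$ is not a permutation polynomial.

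For the main case $a_1a_2\neq 0$, both $f_1$ and $f_2$ have degree $4$, so $f_1^{(q-1)/4}f_2^{(q-1)/4}$ has total degree exactly $2(q-1)$; consequently the reduction modulo $(x_1^q-x_1,x_2^q-x_2)$ leaves the coefficient of $x_1^{q-1}x_2^{q-1}$ equal to the coefficient of $x_1^{q-1}x_2^{q-1}$ in the top homogeneous part $(2a_1)^{(q-1)/4}(2a_2)^{(q-1)/4}(x_1^4-u^2x_2^4)^{(q-1)/2}$ --- and here $q\equiv 1\pmod 4$ is exactly what makes $(x_1^4)^{(q-1)/4} = x_1^{q-1}$ land precisely --- namely $2^{(q-1)/2}(a_1a_2)^{(q-1)/4}(-u^2)^{(q-1)/4}\binom{(q-1)/2}{(q-1)/4}$. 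As $2,a_1,a_2,u$ are all nonzero, condition (ii) fails as soon as $p\nmid\binom{(q-1)/2}{(q-1)/4}$, which is automatic when $q$ is prime, since every factor of that binomial is smaller than $q$.

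I expect the genuinely delicate point to be the remaining prime-power case $q = p^m$, $m\geq 2$. There $\binom{(q-1)/2}{(q-1)/4}$ may vanish modulo $p$ (for instance $\binom{4}{2}\equiv 0\pmod 3$ when $q = 9$), so one should instead use $t_1 = t_2 = (q-1)/2$: writing $w = x_1^4-u^2x_2^4$, the term obtained by selecting $2a_iw$ from every factor contributes $2^{q-1}(a_1a_2)^{(q-1)/2}$ times the reduced coefficient of $x_1^{q-1}x_2^{q-1}$ in $(x_1^4-u^2x_2^4)^{q-1}$, which collapses to $-(a_1a_2)^{(q-1)/2} = \mp 1 \neq 0$ by $\binom{q-1}{k}\equiv(-1)^k\pmod p$ and $u^{(q-1)/2} = -1$; the hard part is then the combinatorial bookkeeping showing that the surviving cross-terms --- those in which some factors contribute the linear summand $x_i$ rather than $w$ --- do not cancel this value. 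Finally, the whole strategy hinges on $(q-1)/4$ being an integer: when $q\equiv 3\pmod 4$ the analogous coefficient degenerates, consistently with the permutation found by Kyureghyan and Zieve \cite{Kyureghyan-Zieve} at $q = 7$.
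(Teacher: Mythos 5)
Your route is essentially the paper's: coordinatize via $x=x_1+x_2\alpha$, obtain $\mathrm{Tr}(x^{q+3})=2(x_1^4-u^2x_2^4)$ (which matches the paper's computation), and kill orthogonality of $(f_1,f_2)$ with Hermite's criterion. Your two completed computations check out. In the degenerate case $(a_1,a_2)=(0,a_2)$ with $a_2\neq 0$, the exponent count for $(t_1,t_2)=(q-1,(q-1)/4)$ really does force the unique contribution $(-2a_2u^2)^{(q-1)/4}x_1^{q-1}x_2^{q-1}$, and both Hermite side conditions hold. In the case $a_1a_2\neq 0$, your degree argument for $t_1=t_2=(q-1)/4$ is sound, and the resulting coefficient $2^{(q-1)/2}(a_1a_2)^{(q-1)/4}(-u^2)^{(q-1)/4}\binom{(q-1)/2}{(q-1)/4}$ is correct. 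The paper instead uses $(t_1,t_2)=((q-1)/2,0)$ and $(0,(q-1)/2)$, which lands on the same binomial coefficient $\binom{(q-1)/2}{(q-1)/4}$, so the two choices are interchangeable; your degenerate-case argument is actually more explicit than the paper's.

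The one genuine gap is the one you flag yourself: when $q=p^m$ with $p\equiv 3\pmod 4$ and $m$ even (so that $q\equiv 1\pmod 4$), Lucas' theorem gives $\binom{(q-1)/2}{(q-1)/4}\equiv 0\pmod p$ (e.g. $\binom{4}{2}\equiv 0\pmod 3$ for $q=9$, $\binom{24}{12}\equiv 0\pmod 7$ for $q=49$), so condition (ii) is not violated by these exponents and the argument stalls. Be aware that the paper's own proof silently assumes this binomial is a unit and concludes $a_1=0$ from a coefficient that is a multiple of it, so you have located a hole in the published argument rather than introduced a new one. Your fallback $t_1=t_2=(q-1)/2$ is the right idea: the pure $w$-term contributes $-(a_1a_2)^{(q-1)/2}\neq 0$ as you compute, and the cross-terms you worry about can only arise when $3(i_1+i_2)=(4-m_1-m_2)(q-1)$ has a solution with $m_1+m_2\in\{2,3\}$, i.e. only when $3\mid q-1$. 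Since $3\nmid 3^m-1$, this closes $q=9$ cleanly; but for $q=49,121,\dots$ one has $p^2\equiv 1\pmod 3$, hence $3\mid q-1$, and the deferred cancellation analysis is genuinely required. That residual case is supplied neither by your proposal nor by the paper.
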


\begin{proof}
Let \( x = x_1 + x_2\alpha \) and \( \gamma = a_1 + a_2\alpha \), with \( \mathrm{Tr}(\alpha) = 0 \) and \( \alpha^2 = u \). We have
\[\begin{split}
f(x)&=f(x_1 + x_2\alpha)\\
&=x_1 + x_2\alpha+(a_1 + a_2\alpha)\mathrm{Tr}\left((x_1 + x_2\alpha)^{q+3}\right)\\
&=x_1 + x_2\alpha+(a_1 + a_2\alpha)\mathrm{Tr}\left((x_1 - x_2\alpha)(x_1 + x_2\alpha)^3\right)\\
&=x_1 + x_2\alpha+(a_1 + a_2\alpha)\mathrm{Tr}\left(x_1^4+2x_1^3x_2\alpha-2x_1x_2^3u\alpha-x_2^4u^2\right)\\
&=x_1 + x_2\alpha+(a_1 + a_2\alpha)(2x_1^4-2x_2^4u^2)\\
&=(x_1+2a_1x_1^4-2a_1x_2^4u^2)+(x_2+2a_2x_1^4-2a_2x_2^4u^2)\alpha.
\end{split}\]
Let
\[f_1(x_1,x_2)=x_1+2a_1x_1^4-2a_1x_2^4u^2,\quad f_2(x_1,x_2)=x_2+2a_2x_1^4-2a_2x_2^4u^2.\]
Take $t_1=\frac{q-1}{2},t_2=0$, we have
\[\begin{split}
f^{t_1}_1(x_1,x_2)f^{t_2}_2(x_1,x_2)&=f^{\frac{q-1}{2}}_1(x_1,x_2)f^{0}_2(x_1,x_2)\\
&=(x_1+2a_1x_1^4-2a_1x_2^4u^2)^{\frac{q-1}{2}}\\
&=\binom{\frac{q-1}{2}}{\frac{q-1}{4}}2^{\frac{q-1}{2}}a_1^{\frac{q-1}{2}}u^{\frac{q - 1}{2}}x_1^{q - 1}x_2^{q - 1}+\text{the terms without}~x_1^{q - 1}x_2^{q - 1}.\\
\end{split}\]
Thus, if \( f(x)\) is a permutation polynomial, then it must hold that \( a_1 = 0 \). Furthermore,
\[\begin{split}
f^{0}_1(x_1,x_2)f^{\frac{q-1}{2}}_2(x_1,x_2)&=(x_2+2a_2x_1^4-2a_2x_2^4u^2)^{\frac{q-1}{2}}\\
&=(-4)^{\frac{q - 1}{4}}\binom{\frac{q-1}{2}}{\frac{q-1}{4}}u^{\frac{q - 1}{2}}a_2^{\frac{q - 1}{2}}x_1^{q - 1}x_2^{q - 1}+\text{the terms without}~x_1^{q - 1}x_2^{q - 1}.
\end{split}\]
Hence, we get \(a_2=0\). Therefore, the polynomial \(f(x)=x+\gamma \mathrm{Tr}(x^{q+3})\) over \(\mathbb{F}_{q^2}\) is a permutation polynomial if and only if \(\gamma = 0\).
\qed\end{proof}

\subsection{The case $x+\gamma \mathrm{Tr}(x^{k+1}\pm x^{q+k})$}
In 2019, Zha et al. \cite{Zha-Hu-Zhang} obtained the following results:

(i) Let \( n \) and \( k \) be positive integers, \( q = p^k \), and \( \gamma \in \mathbb{F}_q^* \). The polynomial \( f(x) = x + \gamma \mathrm{Tr}_q^{q^n}(x^2 - x^{q + 1}) \) permutes \( \mathbb{F}_{q^n} \).

(ii) Let \( k \) be a positive integer, and let \( q = p^k \) where $p$ is an odd prime. Assume \( \gamma \in \mathbb{F}_q^* \) and \( -2\gamma \) is a square of \( \mathbb{F}_q^* \). The polynomial \( f(x) = x + \gamma \mathrm{Tr}_q^{q^2}(x^3 - x^{q + 2}) \) permutes \( \mathbb{F}_{q^2} \).

For result (i), we give a new proof by using Theorem 2.4 and generalize the result to \( x + \gamma\mathrm{Tr}_q^{q^n}(x^2 - x^{q^i + 1}) \), where \( 1 \leq i \leq n-1 \). For the case of \( \mathbb{F}_{q^2} \), we provide an alternative proof for these two results and show that the sufficient condition is also necessary.
\begin{theorem}
(i) If \(\gamma\in \mathbb{F}_q\), then the polynomial \( f(x) = x + \gamma \mathrm{Tr}_q^{q^n}(x^2 - x^{q^i + 1}) \) permutes \( \mathbb{F}_{q^n} \), where \( 1 \leq i \leq n-1 \).

(ii) Let \(q\) be an odd prime power. The polynomial \(f(x)=x+\gamma \mathrm{Tr}_q^{q^2}\left(x^2-x^{q+1}\right)\) over \(\mathbb{F}_{q^2}\) is a permutation polynomial if and only if \(\gamma\in \mathbb{F}_q\).
\end{theorem}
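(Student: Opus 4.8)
The plan is to handle the two parts by rather different means. For part (i), I would appeal to Theorem 2.4: setting $g(x)=\mathrm{Tr}_q^{q^n}(x^2-x^{q^i+1})$, it suffices to prove that $1$ is a $0$-linear translator of $g$, since then $\mathbb{F}_q=1\cdot\mathbb{F}_q\subset P_H$, which is exactly the assertion that $x+\gamma g(x)$ permutes $\mathbb{F}_{q^n}$ for every $\gamma\in\mathbb{F}_q$. To check the translator property, take $u\in\mathbb{F}_q$, so that $u^{q^i}=u$, and expand; a direct computation collapses to
\[
(x+u)^2-(x+u)^{q^i+1}-\bigl(x^2-x^{q^i+1}\bigr)=u\bigl(x-x^{q^i}\bigr).
\]
Applying $\mathrm{Tr}_q^{q^n}$ and using that the trace is invariant under $y\mapsto y^q$ (hence $\mathrm{Tr}_q^{q^n}(x^{q^i})=\mathrm{Tr}_q^{q^n}(x)$), we get $g(x+u)-g(x)=u\,\mathrm{Tr}_q^{q^n}(x-x^{q^i})=0$ for all $x\in\mathbb{F}_{q^n}$ and all $u\in\mathbb{F}_q$. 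This proves (i), and taking $n=2$, $i=1$ yields the ``if'' direction of (ii).

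For the ``only if'' direction of (ii), I would argue in coordinates as in Theorems 3.2--3.4: write $x=x_1+x_2\alpha$, $\gamma=a_1+a_2\alpha$ with $\alpha^2=u$ a non-residue and $\mathrm{Tr}(\alpha)=0$. Since $x^{q+1}=(x_1-x_2\alpha)(x_1+x_2\alpha)=x_1^2-ux_2^2\in\mathbb{F}_q$ and $\mathrm{Tr}(x^2)=2(x_1^2+ux_2^2)$, we obtain
\[
\mathrm{Tr}_q^{q^2}(x^2-x^{q+1})=2(x_1^2+ux_2^2)-2(x_1^2-ux_2^2)=4ux_2^2,
\]
so $f(x)=f_1(x_1,x_2)+f_2(x_1,x_2)\alpha$ with $f_1=x_1+4a_1ux_2^2$ and $f_2=x_2+4a_2ux_2^2$. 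The key structural point is that $f_2$ involves only $x_2$, while $f_1$ is affine in $x_1$ for each fixed $x_2$; solving the system $f_1=r$, $f_2=s$ therefore shows that $(f_1,f_2)$ is orthogonal over $\mathbb{F}_q$ if and only if the single-variable polynomial $g_2(t)=4a_2ut^2+t$ is a permutation of $\mathbb{F}_q$.

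It remains to see that this forces $a_2=0$. If $a_2\neq0$ then, as $q$ is odd and $u\neq0$, $g_2$ is a genuine polynomial of degree $2$; but a quadratic polynomial never permutes $\mathbb{F}_q$ when $q>2$, since $g_2(t)=g_2(t')$ exactly when $t=t'$ or $t+t'=-(4a_2u)^{-1}$, and the latter always admits a solution with $t\neq t'$. Hence $a_2=0$, i.e.\ $\gamma=a_1\in\mathbb{F}_q$, and together with the ``if'' direction this establishes (ii). I expect the one point that needs genuine care to be the equivalence ``$(f_1,f_2)$ orthogonal $\iff g_2$ permutes $\mathbb{F}_q$'': one must verify both that a bijective $g_2$ makes the $2\times2$ system uniquely solvable for every right-hand side, and that a non-bijective $g_2$ produces a right-hand side with either no solution or at least two. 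This is routine but should be written out; alternatively, one can simply run Hermite's criterion (Theorem 3.1) on $f_1^{q-1}f_2^{q-1}$ as in Theorems 3.2--3.4.
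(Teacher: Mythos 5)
Your proposal is correct and follows essentially the same route as the paper: part (i) by verifying that $1$ is a $0$-linear translator of $\mathrm{Tr}_q^{q^n}(x^2-x^{q^i+1})$ and invoking Theorem 2.4, and part (ii) by the coordinate decomposition $f=f_1+f_2\alpha$ with $f_1=x_1+4a_1ux_2^2$, $f_2=x_2+4a_2ux_2^2$, reducing orthogonality to the bijectivity of $t\mapsto t+4a_2ut^2$ on $\mathbb{F}_q$. The only cosmetic difference is that the paper rules out $a_2\neq 0$ by exhibiting an explicit two-solution right-hand side $(r,0)$, whereas you note that a degree-two polynomial cannot permute an odd-order field; both amount to the same collision.
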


\begin{proof}
(i) By Theorem 2.4, we only need to prove that $1$ is a \( 0 \)-linear translator of \( \mathrm{Tr}_q^{q^n}(x^2 - x^{q^i + 1}) \). For any $r\in \mathbb{F}_q$, we have
\[\begin{split}
\mathrm{Tr}_q^{q^n}\left((x+r)^2 - (x+r)^{q^i + 1}\right)&=\mathrm{Tr}_q^{q^n}\left(x^2+2rx+r^2 - (x+r)^{q^i}(x+r)\right)\\
&=\mathrm{Tr}_q^{q^n}\left(x^2+2rx+r^2 - (x^{q^i+1}+rx^{q^i}+rx+r^2)\right)\\
&=\mathrm{Tr}_q^{q^n}\left((x^2-x^{q^i+1})+r(x-x^{q^i})\right)\\
&=\mathrm{Tr}_q^{q^n}(x^2 - x^{q^i + 1}).
\end{split}\]
Hence, $1$ is a \( 0 \)-linear translator of \( \mathrm{Tr}_q^{q^n}(x^2 - x^{q^i + 1}) \). By Theorem 2.4, it follows that $\mathbb{F}_q\subset P_H$.

(ii) Set \( x = x_1 + x_2\alpha \) and \( \gamma = a_1 + a_2\alpha \), with \( \mathrm{Tr}(\alpha) = 0 \) and \( \alpha^2 = u \). We have
\[\begin{split}
f(x)&=f(x_1 + x_2\alpha)\\
&=x_1 + x_2\alpha+(a_1 + a_2\alpha)\mathrm{Tr}_q^{q^2}\left((x_1 + x_2\alpha)^2-(x_1 + x_2\alpha)^{q+1}\right)\\
&=x_1 + x_2\alpha+(a_1 + a_2\alpha)\mathrm{Tr}_q^{q^2}\left((x_1 + x_2\alpha)^2-(x_1 - x_2\alpha)(x_1 + x_2\alpha)\right)\\
&=x_1 + x_2\alpha+(a_1 + a_2\alpha)\mathrm{Tr}_q^{q^2}\left(2\alpha x_2(x_1 + x_2\alpha)\right)\\
&=x_1 + x_2\alpha+4ux_2^2(a_1 + a_2\alpha)\\
&=(x_1+4a_1ux_2^2)+x_2(1+4a_2ux_2)\alpha.
\end{split}\]
Let
\[f_1(x_1,x_2)=x_1+4a_1ux_2^2,\quad f_2(x_1,x_2)=x_2(1+4a_2ux_2).\]
If \( a_2 \neq 0 \), then for any \( r\in \mathbb{F}_q\setminus\{\frac{a_1}{4ua_2^2}\} \), the system \( (f_1, f_2) = (r,0) \) admits two solutions \( (x_1, x_2) = (r, 0) \) and \( \left(r, r-\frac{a_1}{4ua_2^2}\right) \). Hence, \( a_2 \) must equal $0$. At this time, for any \( a_1\in \mathbb{F}_q \), \( f_1(x_1,x_2) \) and \( f_2(x_1,x_2) \) are orthogonal, so \( f(x) \) is a permutation polynomial.
\qed\end{proof}

Note that a similar result can be obtained from Theorem 3.5 (ii) with minor adjustments as shown below:
\begin{corollary}
Let \(q\) be an odd prime power. The polynomial \(f(x)=x+\gamma \mathrm{Tr}\left(x^2+x^{q+1}\right)\) over \(\mathbb{F}_{q^2}\) is a permutation polynomial if and only if \(\alpha\gamma\in \mathbb{F}_q\), where $\alpha^2=u$ and $u$ is a quadratic non-residue.
\end{corollary}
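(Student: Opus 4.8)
The plan is to carry out the same substitution argument used in the proof of Theorem 3.5(ii), tracking the effect of the sign change through the computation. First I would write $x = x_1 + x_2\alpha$ and $\gamma = a_1 + a_2\alpha$ with $\alpha^2 = u$ and $\mathrm{Tr}(\alpha) = 0$, exactly as in the convention fixed at the start of this section. Using $x^{q+1} = (x_1 - x_2\alpha)(x_1 + x_2\alpha) = x_1^2 - x_2^2 u$ together with $x^2 = x_1^2 + 2x_1x_2\alpha + x_2^2 u$, one gets $x^2 + x^{q+1} = 2x_1^2 + 2x_1x_2\alpha$, and since $\mathrm{Tr}(\alpha) = 0$ while $x_1, x_2 \in \mathbb{F}_q$, this gives $\mathrm{Tr}(x^2 + x^{q+1}) = 4x_1^2$. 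Hence
\[ f(x) = (x_1 + 4a_1 x_1^2) + (x_2 + 4a_2 x_1^2)\alpha, \]
so that, writing $f_1(x_1,x_2) = x_1 + 4a_1 x_1^2$ and $f_2(x_1,x_2) = x_2 + 4a_2 x_1^2$, the map $f$ permutes $\mathbb{F}_{q^2}$ if and only if the pair $(f_1, f_2)$ is orthogonal over $\mathbb{F}_q$.

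Next I would settle the two directions. For sufficiency, if $a_1 = 0$ then $f_1 = x_1$ and $f_2 = x_2 + 4a_2 x_1^2$, so for every $(r,s) \in \mathbb{F}_q^2$ the system $(f_1, f_2) = (r,s)$ has the unique solution $x_1 = r$, $x_2 = s - 4a_2 r^2$; thus $(f_1,f_2)$ is orthogonal for every $a_2$, i.e.\ $\alpha\mathbb{F}_q \subseteq P_H$. (Alternatively one may observe directly that $H(x + r\alpha) = H(x) + r\alpha\,\mathrm{Tr}(x)$ for $H(x) = x^2 + x^{q+1}$ and every $r \in \mathbb{F}_q$, whence $\mathrm{Tr}(H(x + r\alpha)) - \mathrm{Tr}(H(x)) = r\,\mathrm{Tr}(x)\,\mathrm{Tr}(\alpha) = 0$, so $\alpha$ is a $0$-linear translator of $\mathrm{Tr}(H(x))$ and Theorem 2.4 yields $\alpha\mathbb{F}_q \subseteq P_H$.) For necessity, suppose $a_1 \neq 0$. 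Since $f_1$ depends only on $x_1$ and $q$ is odd, the quadratic $x_1 \mapsto x_1 + 4a_1 x_1^2$ is not injective on $\mathbb{F}_q$: indeed $f_1(0) = 0 = f_1\bigl(-\tfrac{1}{4a_1}\bigr)$ with $-\tfrac1{4a_1} \neq 0$. Consequently the system $(f_1, f_2) = (0,0)$ has the two distinct solutions $(0,0)$ and $\bigl(-\tfrac{1}{4a_1},\, -\tfrac{a_2}{4a_1^2}\bigr)$, so $(f_1,f_2)$ is not orthogonal and $f$ is not a permutation polynomial.

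Combining the two parts, $f$ permutes $\mathbb{F}_{q^2}$ precisely when $a_1 = 0$, i.e.\ when $\gamma \in \alpha\mathbb{F}_q$; and since $\alpha^2 = u \in \mathbb{F}_q^*$, this is equivalent to $\alpha\gamma \in \mathbb{F}_q$, which is the claimed statement. The computation is routine and I do not expect a genuine obstacle; the only points that need care are propagating the sign correctly, so that it is the coefficient of $x_1^2$ (rather than $x_2^2$, as in Theorem 3.5(ii)) that is forced to vanish, which is exactly what turns the conclusion $\gamma \in \mathbb{F}_q$ into $\alpha\gamma \in \mathbb{F}_q$, and exhibiting the explicit collision $f_1(0) = f_1(-\tfrac{1}{4a_1})$ cleanly enough to rule out orthogonality when $a_1 \neq 0$.
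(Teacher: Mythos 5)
Your proof is correct, but it takes a different route from the paper. The paper's proof of this corollary is a two-line reduction: substituting $x=\alpha t$ gives $f(x)=\alpha\bigl(t+\alpha\gamma\,\mathrm{Tr}(t^2-t^{q+1})\bigr)$, since $(\alpha t)^{q+1}=-u\,t^{q+1}$, and then Theorem 3.5(ii) applied with the parameter $\alpha\gamma$ in place of $\gamma$ immediately yields the condition $\alpha\gamma\in\mathbb{F}_q$. You instead redo the coordinate computation from scratch: your expansion $\mathrm{Tr}(x^2+x^{q+1})=4x_1^2$ is right, the resulting pair $f_1=x_1+4a_1x_1^2$, $f_2=x_2+4a_2x_1^2$ is right, the explicit collision $f_1(0)=f_1(-\tfrac{1}{4a_1})$ correctly rules out orthogonality when $a_1\neq 0$, and the sufficiency argument (either the direct solution of the system when $a_1=0$, or the observation that $\alpha$ is a $0$-linear translator combined with Theorem 2.4) is also sound; the final translation of $a_1=0$ into $\alpha\gamma\in\mathbb{F}_q$ via $\alpha^{-1}=\alpha/u$ is correct. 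The paper's substitution trick is shorter and makes the ``$+$ versus $-$'' symmetry transparent, which is presumably why the statement is a corollary rather than a theorem; your direct computation is self-contained and, as it happens, easier than the computation inside Theorem 3.5(ii) itself, because here the trace annihilates the $\alpha$-component entirely and reduces everything to a single quadratic in $x_1$. Either argument is acceptable.
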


\begin{proof}
Let $x=\alpha t$, $t\in\mathbb{F}_{q^2}$, then we have
\[f(x)=\alpha t+\gamma \mathrm{Tr}\left((\alpha t)^2+(\alpha t)^{q+1}\right)=\alpha t+\gamma \mathrm{Tr}\left(ut^2-u t^{q+1}\right)=\alpha \left(t+\alpha\gamma \mathrm{Tr}\left(t^2-t^{q+1}\right)\right).\]
Therefore, by Theorem 3.5, \(f(x)=x+\gamma \mathrm{Tr}\left(x^2+x^{q+1}\right)\) over \(\mathbb{F}_{q^2}\) is a permutation polynomial if and only if \(\alpha\gamma\in \mathbb{F}_q\).
\end{proof}

\begin{theorem}
Let \(q\) be an odd prime power. The polynomial \(f(x)=x+\gamma \mathrm{Tr}\left(x^3-x^{q+2}\right)\) over \(\mathbb{F}_{q^2}\) is a permutation polynomial if and only if $\gamma\in\mathbb{F}_q$ and \(-2\gamma \) is a square of $\mathbb{F}_q$.
\end{theorem}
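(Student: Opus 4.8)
The plan is to follow the strategy used for the earlier theorems in this section, and in particular Theorem 3.5(ii): write $x = x_1 + x_2\alpha$ and $\gamma = a_1 + a_2\alpha$ with $x_1,x_2,a_1,a_2 \in \mathbb{F}_q$, where $\{1,\alpha\}$ is the fixed basis with $\alpha^2 = u$ a quadratic non-residue and $\mathrm{Tr}(\alpha)=0$, and use the fact that $f$ is a permutation polynomial of $\mathbb{F}_{q^2}$ if and only if the components $f_1,f_2$ in $f(x)=f_1(x_1,x_2)+f_2(x_1,x_2)\alpha$ form an orthogonal system over $\mathbb{F}_q$, i.e.\ $(x_1,x_2)\mapsto(f_1,f_2)$ is a bijection of $\mathbb{F}_q^2$. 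The first step is to compute $\mathrm{Tr}(x^3-x^{q+2})$: since $x^q = x_1 - x_2\alpha$, one has $x^3 - x^{q+2} = x^2(x-x^q) = (x_1+x_2\alpha)^2\cdot 2x_2\alpha$, whose $\mathbb{F}_q$-part equals $4ux_1x_2^2$; as $\mathrm{Tr}$ doubles the $\mathbb{F}_q$-part and annihilates the $\alpha$-part, this gives $\mathrm{Tr}(x^3-x^{q+2}) = 8ux_1x_2^2$. Consequently $f_1(x_1,x_2)=x_1(1+8a_1ux_2^2)$ and $f_2(x_1,x_2)=x_2(1+8a_2ux_1x_2)$.

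For the ``only if'' direction I would argue directly that $a_2\neq 0$ destroys injectivity. Put $P=\bigl(-(8a_2u)^{-1},\,1\bigr)\in\mathbb{F}_q^2$ and $r_0 = f_1(P) = -(8a_2u)^{-1}(1+8a_1u)$. A direct substitution shows $f_2(P)=0$, while also $f_1(r_0,0)=r_0$ and $f_2(r_0,0)=0$; since $P\neq(r_0,0)$ (their second coordinates differ), the map $(f_1,f_2)$ sends two distinct points of $\mathbb{F}_q^2$ to $(r_0,0)$, so $f$ is not a permutation polynomial. Hence being a permutation forces $a_2=0$, i.e.\ $\gamma\in\mathbb{F}_q$.

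It remains to treat $\gamma=a_1\in\mathbb{F}_q$, where $f_2=x_2$ and $f_1=x_1(1+8a_1ux_2^2)$; then $(f_1,f_2)$ is a bijection of $\mathbb{F}_q^2$ precisely when $1+8a_1us^2\neq 0$ for every $s\in\mathbb{F}_q$. If $a_1=0$ this holds and $-2a_1=0$ is a square; if $a_1\neq 0$, the equation $1+8a_1us^2=0$ is solvable in $\mathbb{F}_q$ iff $-(8a_1u)^{-1}$ is a square, equivalently iff $-8a_1u$ is a square (the two differ by a square), equivalently iff $-2a_1$ is a non-residue (since $u$ is a non-residue while $8=4\cdot 2$ with $4$ a square). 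Therefore $f$ is a permutation polynomial iff $-2a_1$ is a square of $\mathbb{F}_q$, which combined with the previous paragraph yields the stated equivalence.

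The only delicate point is the final quadratic-residue bookkeeping in the last paragraph --- keeping careful track of which scalings (by $4$, by $(8a_1u)^{-2}$, by the non-residue $u$) preserve or flip the quadratic character --- while everything else is a routine computation. I expect the rest to present no real obstacle; in particular, unlike in the proofs of Theorems 3.2--3.4, no appeal to Hermite's criterion is needed here, since the explicit collision exhibited above already settles the necessity of $a_2=0$.
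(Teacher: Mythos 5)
Your proposal is correct, and it follows the same overall framework as the paper: the same basis decomposition, the same computation $\mathrm{Tr}(x^3-x^{q+2})=8ux_1x_2^2$, the same component polynomials $f_1=x_1(1+8a_1ux_2^2)$, $f_2=x_2(1+8a_2ux_1x_2)$, and the same reduction of the case $\gamma=a_1\in\mathbb{F}_q$ to the non-vanishing of $1+8a_1us^2$, with the same quadratic-character bookkeeping (which you carry out correctly: $-8a_1u$ is a square iff $-2a_1$ is a non-residue, since $u$ is a non-residue). Where you genuinely diverge is in ruling out $a_2\neq 0$. The paper splits this into $a_1=0$ versus $a_1\neq 0$, and in the latter case further into whether $1+8a_1x_2^2u$ has a zero; the hardest subcase requires showing that the quadratic $x_1^2-rx_1+\frac{a_1}{8a_2^2u}=0$ has a square discriminant for some $r\neq 0$, via an explicit parametrization $r+w=a_1k$, $r-w=\frac{1}{2a_2^2uk}$. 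Your single collision --- $P=\bigl(-(8a_2u)^{-1},1\bigr)$ and $(r_0,0)$ with $r_0=-(8a_2u)^{-1}(1+8a_1u)$ both mapping to $(r_0,0)$ --- works uniformly for every $a_1$ (including when $r_0=0$, where the second point is the origin), so it replaces the paper's entire case analysis and discriminant argument with one line. This is a real simplification and, as you note, avoids Hermite's criterion entirely, which the paper also does for this particular theorem.
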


\begin{proof}
Let \( x = x_1 + x_2\alpha \) and \( \gamma = a_1 + a_2\alpha \), with \( \mathrm{Tr}(\alpha) = 0 \) and \( \alpha^2 = u \). We have
\[\begin{split}
f(x)&=f(x_1 + x_2\alpha)\\
&=x_1 + x_2\alpha+(a_1 + a_2\alpha)\mathrm{Tr}\left((x_1 + x_2\alpha)^3-(x_1 + x_2\alpha)^{q+2}\right)\\
&=x_1 + x_2\alpha+(a_1 + a_2\alpha)\mathrm{Tr}\left(2x_1^2x_2\alpha+2ux_2^3\alpha+4x_1x_2^2u\right)\\
&=x_1 + x_2\alpha+8x_1x_2^2u(a_1 + a_2\alpha)\\
&=x_1(1+8a_1x_2^2u)+x_2(1+8a_2x_1x_2u)\alpha.
\end{split}\]
Let
\[f_1(x_1,x_2)=x_1(1+8a_1x_2^2u),\quad f_2(x_1,x_2)=x_2(1+8a_2x_1x_2u).\]
When \(a_1 = a_2 = 0\), i.e., \(\gamma = 0\), \(f(x)\) is a permutation polynomial. When \(a_1 = 0\) and \(a_2 \neq 0\), for any \(r \in \mathbb{F}_q \setminus \{0\}\), the system \( (f_1, f_2) = (r,0) \) admits two solutions \((x_1, x_2) = (r, 0)\) and \((r, \frac{-1}{8a_2ru})\); hence, \(f(x)\) is not a permutation polynomial. When \(a_2 = 0\) and \(a_1 \neq 0\), \(f(x)\) is a permutation polynomial if and only if \(1 + 8a_1x_2^2u\) never equals zero. This implies that $\gamma\in\mathbb{F}_q$ and \( -2a_1 = -2\gamma \) is a square of $\mathbb{F}_q$.

Next, we discuss the case where \(a_1 \neq 0\) and \(a_2 \neq 0\). If there exists \(t \in \mathbb{F}_q\setminus\{0\}\) such that \(1 + 8a_1t^2u = 0\), for any \(s \in \mathbb{F}_q \setminus \{t\}\), the system \((f_1, f_2)\) has two distinct solutions \((x_1, x_2) = (0, s)\) and \((t, \frac{s-t}{8a_2ut^2})\). If \(1 + 8a_1x_2^2u\) never equals zero, we show that there exists \(r \in \mathbb{F}_q\setminus\{0\}\) such that \((f_1, f_2) = (r, 0)\) has two distinct solutions \((x_1, x_2)\), which indicates that \(f(x)\) is not a permutation polynomial. From \(f_2 = 0\), we obtain \(x_2 = 0\) or \(x_2 = \frac{-1}{8a_2x_1u}\). If \(x_2 = 0\), then from \(f_1 = r\), we get \(x_1 = r\). If \(x_2 = \frac{-1}{8a_2x_1u}\), substituting into \(f_1 = r\) yields \(x_1 + \frac{a_1}{8a_2^2ux_1} = r\), where \(x_1 \neq 0\) and \(x_1 \neq r\). After simplification, we derive \(x_1^2 - rx_1 + \frac{a_1}{8a_2^2u} = 0\). If there exists \(r \in \mathbb{F}_q \setminus \{0\}\) such that the discriminant \(\Delta = r^2 - \frac{a_1}{2a_2^2u}\) of this quadratic equation is a square, then the quadratic equation has solutions. Let us set \(\Delta = r^2 - \frac{a_1}{2a_2^2u} = w^2\), so \((r + w)(r - w) = \frac{a_1}{2a_2^2u}\). Let \(r + w = a_1k\) and \(r - w = \dfrac{1}{2a_2^2uk}\) with \(k \in \mathbb{F}_q \setminus \{0\}\), then we get \(r = \frac{a_1k}{2}+\frac{1}{4a_2^2uk}\) and \(w = \frac{a_1k}{2}-\frac{1}{4a_2^2uk}\). Therefore, in this case, \(f(x)\) is not a permutation polynomial. To sum up, \( f(x) \) is a permutation polynomial if and only if $\gamma\in\mathbb{F}_q$ and \(-2\gamma \) is a square of $\mathbb{F}_q$.
\qed\end{proof}

\begin{theorem}
Let \(q\) be an odd prime power. If $\text{char}(\mathbb{F}_{q^2})\neq3$, the polynomial \(f(x)=x+\gamma \mathrm{Tr}\left(x^4-x^{q+3}\right)\) over \(\mathbb{F}_{q^2}\) is a permutation polynomial if and only if \(\gamma = 0\). If $\text{char}(\mathbb{F}_{q^2})=3$, the polynomial \(f(x)=x+\gamma \mathrm{Tr}\left(x^4-x^{q+3}\right)\) over \(\mathbb{F}_{q^2}\) is a permutation polynomial if and only if \(\gamma\in \mathbb{F}_q\).
\end{theorem}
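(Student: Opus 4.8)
The plan is to follow the template of Theorems 3.2--3.6: work in the basis $\{1,\alpha\}$ with $\alpha^2=u$ a quadratic non-residue and $\mathrm{Tr}(\alpha)=0$, write $x=x_1+x_2\alpha$, $\gamma=a_1+a_2\alpha$, decompose $f(x)=f_1(x_1,x_2)+f_2(x_1,x_2)\alpha$, and use that $f$ permutes $\mathbb{F}_{q^2}$ if and only if $\{f_1,f_2\}$ is an orthogonal system over $\mathbb{F}_q$. The first step is the trace computation: since $x^q=x_1-x_2\alpha$ we have $x^4-x^{q+3}=x^3(x-x^q)=2x_2\alpha\cdot x^3$, and expanding $x^3$ and applying $\mathrm{Tr}$ (which kills the $\alpha$-component and doubles the $\mathbb{F}_q$-component) gives $\mathrm{Tr}(x^4-x^{q+3})=12u\,x_1^2x_2^2+4u^2x_2^4$. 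This identity is where the hypothesis on the characteristic enters: if $\mathrm{char}(\mathbb{F}_{q^2})\neq3$ the right-hand side equals $4ux_2^2(3x_1^2+ux_2^2)$, whereas if $\mathrm{char}(\mathbb{F}_{q^2})=3$ the first term vanishes and only $u^2x_2^4$ survives, so the two cases must be treated separately.

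For $\mathrm{char}(\mathbb{F}_{q^2})\neq3$, put $h:=4ux_2^2(3x_1^2+ux_2^2)$, which is homogeneous of degree $4$, so that $f_1=x_1+a_1h$ and $f_2=x_2+a_2h$. If $\gamma=0$ then $f(x)=x$. Conversely, if $\gamma\neq0$ then $a_1\neq0$ or $a_2\neq0$, and I will apply Hermite's criterion (Theorem 3.1): when $a_1\neq0$ take $(t_1,t_2)=(\frac{q-1}{2},0)$ and when $a_2\neq0$ take $(t_1,t_2)=(0,\frac{q-1}{2})$. In either case the relevant product is $(x_i+a_ih)^{(q-1)/2}$, which has total degree $2(q-1)$; since the monomial $x_1^{q-1}x_2^{q-1}$ can arise, after reduction modulo $(x_1^q-x_1,x_2^q-x_2)$, only from a monomial $x_1^ax_2^b$ with $a$ and $b$ each a positive multiple of $q-1$, forcing $a+b\geq2(q-1)$, the only contributing term is the leading homogeneous part $a_i^{(q-1)/2}h^{(q-1)/2}=a_i^{(q-1)/2}(4u)^{(q-1)/2}x_2^{q-1}(3x_1^2+ux_2^2)^{(q-1)/2}$. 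Extracting the coefficient of $x_1^{q-1}x_2^{q-1}$ from the $m=\frac{q-1}{2}$ term of the binomial expansion of $(3x_1^2+ux_2^2)^{(q-1)/2}$ gives $(12ua_i)^{(q-1)/2}$, which equals $\pm1$ and is therefore nonzero (here $12\neq0$ in $\mathbb{F}_q$ because $\mathrm{char}(\mathbb{F}_q)$ is odd and not $3$). As $(t_1,t_2)$ is not identically $q-1$ and $\frac{q-1}{2}\not\equiv0\pmod p$, condition (ii) of Hermite's criterion fails, so $\{f_1,f_2\}$ is not orthogonal and $f$ is not a permutation polynomial; hence $P_H=\{0\}$.

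For $\mathrm{char}(\mathbb{F}_{q^2})=3$ we get $f_1=x_1+a_1u^2x_2^4$ and $f_2=x_2+a_2u^2x_2^4$, so $f_2$ depends on $x_2$ only. Writing $g(x_2):=x_2+a_2u^2x_2^4$, for each $(r,s)\in\mathbb{F}_q^2$ the system $(f_1,f_2)=(r,s)$ has exactly as many solutions as $g(x_2)=s$ (once such an $x_2$ is chosen, $x_1=r-a_1u^2x_2^4$ is forced), so $\{f_1,f_2\}$ is orthogonal exactly when $g$ permutes $\mathbb{F}_q$. If $a_2=0$, i.e. $\gamma\in\mathbb{F}_q$, then $g(x_2)=x_2$ and $f$ is a permutation polynomial. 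If $a_2\neq0$, then $g(x_2)=x_2(1+a_2u^2x_2^3)$; since $x_2\mapsto x_2^3$ is a bijection of $\mathbb{F}_q$ in characteristic $3$, the equation $x_2^3=-(a_2u^2)^{-1}$ has a unique nonzero root $x_2^{*}$, whence $g(x_2^{*})=0=g(0)$ and $g$ is not injective, so $f$ is not a permutation polynomial. Thus in characteristic $3$, $f$ permutes $\mathbb{F}_{q^2}$ if and only if $\gamma\in\mathbb{F}_q$.

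I expect the main obstacle to be the bookkeeping in the case $\mathrm{char}\neq3$: one must argue carefully that only the top homogeneous component of $(x_i+a_ih)^{(q-1)/2}$ can contribute to the coefficient of $x_1^{q-1}x_2^{q-1}$ after reduction modulo $(x_1^q-x_1,x_2^q-x_2)$ (the degree count above), and then extract that coefficient cleanly from the factorization $h=4ux_2^2(3x_1^2+ux_2^2)$ and the resulting single binomial expansion, so that the degenerate subcases $a_1=0\neq a_2$ and $a_2=0\neq a_1$ are covered by the same computation rather than requiring the ad hoc collision arguments used elsewhere in this section. The characteristic-$3$ case is comparatively routine once the trace has been computed.
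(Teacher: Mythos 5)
Your proposal is correct and follows essentially the same route as the paper: the same trace computation giving $\mathrm{Tr}(x^4-x^{q+3})=12ux_1^2x_2^2+4u^2x_2^4$, the same application of Hermite's criterion with $(t_1,t_2)=(\tfrac{q-1}{2},0)$ and $(0,\tfrac{q-1}{2})$ yielding the nonzero coefficient $(12ua_i)^{(q-1)/2}$ when $\operatorname{char}\neq 3$, and the same triangular/cubing argument in characteristic $3$. Your degree-counting justification that only the top homogeneous component of $(x_i+a_ih)^{(q-1)/2}$ can contribute to $x_1^{q-1}x_2^{q-1}$ is a welcome explicit account of a step the paper leaves implicit, but it does not change the method.
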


\begin{proof} Suppose that $\text{char}(\mathbb{F}_{q^2})\neq3$. Set \( x = x_1 + x_2\alpha \) and \( \gamma = a_1 + a_2\alpha \), with \( \mathrm{Tr}(\alpha) = 0 \) and \( \alpha^2 = u \). We have
\[\begin{split}
f(x)&=f(x_1 + x_2\alpha)\\
&=x_1 + x_2\alpha+(a_1 + a_2\alpha)\mathrm{Tr}\left((x_1 + x_2\alpha)^4-(x_1 + x_2\alpha)^{q+3}\right)\\
&=x_1 + x_2\alpha+(a_1 + a_2\alpha)\mathrm{Tr}\left((x_1 + x_2\alpha)^4-(x_1 - x_2\alpha)(x_1 + x_2\alpha)^3\right)\\
&=x_1 + x_2\alpha+(a_1 + a_2\alpha)\mathrm{Tr}\left(2x_1^3x_2\alpha+6x_1^2x_2^2u+6x_1x_2^2u\alpha+2x_2^4u^2\right)\\
&=x_1 + x_2\alpha+(a_1 + a_2\alpha)\left(12x_1^2x_2^2u+4x_2^4u^2\right)\\
&=(x_1+12a_1x_1^2x_2^2u+4a_1x_2^4u^2)+(x_2+12a_2x_1^2x_2^2u+4a_2x_2^4u^2)\alpha.
\end{split}\]
Let
\[f_1(x_1,x_2)=x_1+12a_1x_1^2x_2^2u+4a_1x_2^4u^2,\quad f_2(x_1,x_2)=x_2+12a_2x_1^2x_2^2u+4a_2x_2^4u^2.\]
Take $t_1=\frac{q-1}{2},t_2=0$, we have
\[\begin{split}
f^{t_1}_1(x_1,x_2)f^{t_2}_2(x_1,x_2)&=f^{\frac{q-1}{2}}_1(x_1,x_2)f^{0}_2(x_1,x_2)\\
&=(x_1+12a_1x_1^2x_2^2u+4a_1x_2^4u^2)^{\frac{q-1}{2}}\\
&=12^{\frac{q - 1}{2}}a_1^{\frac{q - 1}{2}}u^{\frac{q - 1}{2}}x_1^{q - 1}x_2^{q - 1}+\text{the terms without}~x_1^{q - 1}x_2^{q - 1}.
\end{split}\]
Thus, if \( f(x)\) is a permutation polynomial, then it must hold that \( a_1 = 0 \). Furthermore,
\[\begin{split}
f^{0}_1(x_1,x_2)f^{\frac{q-1}{2}}_2(x_1,x_2)&=(x_2+12a_2x_1^2x_2^2u+4a_2x_2^4u^2)^{\frac{q-1}{2}}\\
&=12^{\frac{q - 1}{2}}a_2^{\frac{q - 1}{2}}u^{\frac{q - 1}{2}}x_1^{q - 1}x_2^{q - 1}+\text{the terms without}~x_1^{q - 1}x_2^{q - 1}.
\end{split}\]
Hence, we get \(a_2=0\). Therefore, the polynomial \(f(x)=x+\gamma \mathrm{Tr}\left(x^4-x^{q+3}\right)\) over \(\mathbb{F}_q^2\) is a permutation polynomial if and only if \(\gamma = 0\).

If $\text{char}(\mathbb{F}_{q^2})=3$, we have
\[f_1(x_1,x_2)=x_1+4a_1x_2^4u^2,\quad f_2(x_1,x_2)=x_2+4a_2x_2^4u^2.\]
Since \( (3, q^2 - 1) = 1 \), \( x^3 \) is a permutation polynomial over \( \mathbb{F}_{q^2} \). Then for any \( a_2 \neq 0 \), there exists \( t \) such that \( 1 + 4a_2u^2t^3 = 0 \). If \( a_2 \neq 0 \), then for any \( r\in \mathbb{F}_q \), the system \( (f_1, f_2) = (r, 0) \) admits two solutions \( (x_1, x_2) = (r, 0) \) and \( \left(r-4a_1u^2t^4,t\right) \). Hence, \( a_2 \) must equal $0$. Moreover, $f_1(x_1,x_2)$ is a permutation polynomial for any $a_1\in \mathbb{F}_q$.
\qed\end{proof}

\begin{corollary}
Let \(q\) be an odd prime power. If $\text{char}(\mathbb{F}_{q^2})\neq3$, the polynomial \(f(x)=x+\gamma \mathrm{Tr}\left(x^4+x^{q+3}\right)\) over \(\mathbb{F}_{q^2}\) is a permutation polynomial if and only if \(\gamma = 0\). If $\text{char}(\mathbb{F}_{q^2})=3$, the polynomial \(f(x)=x+\gamma \mathrm{Tr}\left(x^4+x^{q+3}\right)\) over \(\mathbb{F}_{q^2}\) is a permutation polynomial if and only if \(\alpha\gamma\in \mathbb{F}_q\), where $\alpha^2=u$ and $u$ is a quadratic non-residue.
\end{corollary}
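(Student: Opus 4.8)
The plan is to deduce this directly from Theorem 3.7 by a change of variable, exactly as Corollary 3.1 was deduced from Theorem 3.5. First I would set $x = \alpha t$ with $t$ ranging over $\mathbb{F}_{q^2}$; since multiplication by $\alpha$ is a bijection of $\mathbb{F}_{q^2}$, the polynomial $f$ permutes $\mathbb{F}_{q^2}$ if and only if the map $t \mapsto \alpha^{-1} f(\alpha t)$ does. Using $\alpha^2 = u \in \mathbb{F}_q^*$ and $\alpha^q = -\alpha$, one computes $(\alpha t)^4 = u^2 t^4$ and $(\alpha t)^{q+3} = (\alpha t)^q (\alpha t)^3 = (-\alpha)\,\alpha^3\, t^{q+3} = -u^2 t^{q+3}$, hence $x^4 + x^{q+3} = u^2\bigl(t^4 - t^{q+3}\bigr)$.

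Since $u^2 \in \mathbb{F}_q^*$ can be pulled out of the trace, this gives $\gamma\,\mathrm{Tr}(x^4 + x^{q+3}) = \gamma u^2\,\mathrm{Tr}(t^4 - t^{q+3})$, so that
\[ f(\alpha t) = \alpha t + \gamma u^2\,\mathrm{Tr}\bigl(t^4 - t^{q+3}\bigr) = \alpha\Bigl(t + \alpha^{-1}\gamma u^2\,\mathrm{Tr}\bigl(t^4 - t^{q+3}\bigr)\Bigr) = \alpha\Bigl(t + \alpha u\gamma\,\mathrm{Tr}\bigl(t^4 - t^{q+3}\bigr)\Bigr), \]
where I used $\alpha^{-1}u^2 = \alpha^3 = \alpha u$. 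Consequently $f$ permutes $\mathbb{F}_{q^2}$ if and only if $g(t) := t + (\alpha u\gamma)\,\mathrm{Tr}(t^4 - t^{q+3})$ permutes $\mathbb{F}_{q^2}$.

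It then remains to apply Theorem 3.7 to $g$, with the coefficient $\gamma$ there replaced by $\alpha u\gamma$. If $\text{char}(\mathbb{F}_{q^2}) \neq 3$, Theorem 3.7 says $g$ is a permutation polynomial exactly when $\alpha u\gamma = 0$, and since $\alpha u \neq 0$ this means $\gamma = 0$. If $\text{char}(\mathbb{F}_{q^2}) = 3$, Theorem 3.7 says $g$ is a permutation polynomial exactly when $\alpha u\gamma \in \mathbb{F}_q$; because $u \in \mathbb{F}_q^*$, multiplication by $u$ preserves $\mathbb{F}_q$, so this condition is equivalent to $\alpha\gamma \in \mathbb{F}_q$, which is the asserted criterion. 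There is no genuine obstacle here — the argument is a routine reduction — and the only points that need care are the Frobenius computation of $(\alpha t)^{q+3}$ (keeping $t^q$ symbolic, since $t$ runs over all of $\mathbb{F}_{q^2}$, while $\alpha^q = -\alpha$), the identity $\alpha^{-1}u^2 = \alpha u$, and the final observation that $u \in \mathbb{F}_q^*$ makes the conditions $\alpha u\gamma \in \mathbb{F}_q$ and $\alpha\gamma \in \mathbb{F}_q$ equivalent.
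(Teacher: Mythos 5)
Your proposal is correct and follows essentially the same route as the paper: substitute $x=\alpha t$, use $\alpha^q=-\alpha$ and $\alpha^2=u$ to get $x^4+x^{q+3}=u^2(t^4-t^{q+3})$, factor out $\alpha$ to reduce to Theorem 3.7 with $\gamma$ replaced by $\alpha u\gamma$, and then note that $u\in\mathbb{F}_q^*$ makes $\alpha u\gamma\in\mathbb{F}_q$ equivalent to $\alpha\gamma\in\mathbb{F}_q$. Your write-up is in fact slightly more explicit than the paper's about the intermediate identities $\alpha^{-1}u^2=\alpha u$ and the Frobenius computation.
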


\begin{proof}
Let $x=\alpha t$, $t\in\mathbb{F}_{q^2}$, then we have
\[f(x)=\alpha t+\gamma \mathrm{Tr}\left((\alpha t)^4+(\alpha t)^{q+3}\right)=\alpha t+\gamma \mathrm{Tr}\left(u^2t^4-u^2 t^{q+3}\right)=\alpha \left(t+\alpha u\gamma \mathrm{Tr}\left(t^4-t^{q+3}\right)\right).\]
If $\text{char}(\mathbb{F}_{q^2})\neq3$, by Theorem 3.7, \(f(x)=x+\gamma \mathrm{Tr}\left(x^4+x^{q+3}\right)\) over \(\mathbb{F}_{q^2}\) is a permutation polynomial if and only if \(\alpha u\gamma=0\), i.e., \(\gamma=0\). If $\text{char}(\mathbb{F}_{q^2})=3$, the polynomial \(f(x)=x+\gamma \mathrm{Tr}\left(x^4+x^{q+3}\right)\) over \(\mathbb{F}_{q^2}\) is a permutation polynomial if and only if \(\alpha u\gamma\in \mathbb{F}_q\), i.e., \(\alpha\gamma\in \mathbb{F}_q\).
\end{proof}

\begin{theorem}
Let \(q\) be an odd prime power. The polynomial \(f(x)=x+\gamma \mathrm{Tr}\left(x^{q+3}-x^{2q+2}\right)\) over \(\mathbb{F}_{q^2}\) is a permutation polynomial if and only if \(\gamma = 0\).
\end{theorem}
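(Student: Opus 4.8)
The plan is to follow the same strategy as in Theorems 3.4 and 3.7: reduce the question to an orthogonality problem for a pair of bivariate polynomials over $\mathbb{F}_q$ and then invoke Hermite's Criterion (Theorem 3.1). Write $x = x_1 + x_2\alpha$ and $\gamma = a_1 + a_2\alpha$ in the basis $\{1,\alpha\}$ with $\alpha^2 = u$ a quadratic non-residue and $\mathrm{Tr}(\alpha) = 0$, so that $x^q = x_1 - x_2\alpha$. The first task is to compute $\mathrm{Tr}(x^{q+3} - x^{2q+2})$ explicitly. Since $x^{q+1} = x^q x = x_1^2 - u x_2^2 \in \mathbb{F}_q$, we get $x^{2q+2} = (x_1^2 - u x_2^2)^2$, hence $\mathrm{Tr}(x^{2q+2}) = 2(x_1^2 - u x_2^2)^2$. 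For $x^{q+3} = x^q x^3$, expanding $x^3 = (x_1^3 + 3u x_1 x_2^2) + (3x_1^2 x_2 + u x_2^3)\alpha$ and multiplying by $x_1 - x_2\alpha$ shows that the $\mathbb{F}_q$-component of $x^{q+3}$ is $x_1^4 - u^2 x_2^4$ (this coincides with the expansion already performed for $x^{q+3}$ in the proof of Theorem 3.4), so $\mathrm{Tr}(x^{q+3}) = 2(x_1^4 - u^2 x_2^4)$. Taking the difference and factoring $x_1^4 - u^2 x_2^4 = (x_1^2 - u x_2^2)(x_1^2 + u x_2^2)$ yields the clean identity
\[\mathrm{Tr}(x^{q+3} - x^{2q+2}) = 4u x_2^2(x_1^2 - u x_2^2).\]

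Consequently $f(x) = f_1(x_1,x_2) + f_2(x_1,x_2)\alpha$ with
\[f_1(x_1,x_2) = x_1 + 4 a_1 u x_1^2 x_2^2 - 4 a_1 u^2 x_2^4, \qquad f_2(x_1,x_2) = x_2 + 4 a_2 u x_1^2 x_2^2 - 4 a_2 u^2 x_2^4,\]
and $f$ is a permutation of $\mathbb{F}_{q^2}$ if and only if the system $(f_1,f_2)$ is orthogonal over $\mathbb{F}_q$. I would then apply condition (ii) of Hermite's Criterion twice. With $(t_1,t_2) = (\frac{q-1}{2},0)$: the polynomial $f_1^{(q-1)/2}$ has total degree $2q-2$, and a short degree count shows that, after reduction modulo $(x_1^q - x_1, x_2^q - x_2)$, the only contribution to the monomial $x_1^{q-1}x_2^{q-1}$ comes from the top-degree homogeneous part $\left(4 a_1 u x_2^2(x_1^2 - u x_2^2)\right)^{(q-1)/2}$, in which the coefficient of $x_1^{q-1}x_2^{q-1}$ is $(4 a_1 u)^{(q-1)/2}$. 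Since $\frac{q-1}{2}\not\equiv 0 \pmod p$ and $(t_1,t_2)\neq (q-1,q-1)$, orthogonality forces $(4a_1 u)^{(q-1)/2} = 0$, i.e. $a_1 = 0$. Repeating with $(t_1,t_2) = (0,\frac{q-1}{2})$ and $f_2^{(q-1)/2}$ gives $(4 a_2 u)^{(q-1)/2} = 0$, hence $a_2 = 0$. Therefore $\gamma = 0$; conversely $\gamma = 0$ makes $f(x) = x$, which is a permutation.

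The routine parts are the algebraic identity for $\mathrm{Tr}(x^{q+3} - x^{2q+2})$ (kept short by reusing the $x^{q+3}$ expansion from Theorem 3.4 and the factorization above) and the coefficient extraction, which mirrors Theorems 3.4 and 3.7. The one point deserving care — and which I regard as the main obstacle — is justifying that no lower-degree terms of $f_1^{(q-1)/2}$ (resp.\ $f_2^{(q-1)/2}$) fold back onto $x_1^{q-1}x_2^{q-1}$ under reduction modulo $x_i^q - x_i$; this holds because $x_1^{q-1}x_2^{q-1}$ already has degree $2q-2$, which is exactly the degree of $f_1^{(q-1)/2}$, so only its leading homogeneous component can reach that monomial, and this should be written out explicitly. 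I would also remark that the degenerate subcases $a_1 = 0,\ a_2\neq 0$ and $a_1\neq 0,\ a_2 = 0$ are absorbed by the two Hermite computations, so no separate elementary ``two-solution'' argument is required here.
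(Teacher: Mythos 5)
Your proposal is correct and follows essentially the same route as the paper: the same coordinate decomposition yielding $\mathrm{Tr}(x^{q+3}-x^{2q+2})=4ux_2^2(x_1^2-ux_2^2)$, the same pair $f_1,f_2$, and the same two applications of Hermite's Criterion with $(t_1,t_2)=(\tfrac{q-1}{2},0)$ and $(0,\tfrac{q-1}{2})$ forcing $a_1=a_2=0$. Your explicit degree-count justification that only the top homogeneous part of $f_i^{(q-1)/2}$ can reduce to $x_1^{q-1}x_2^{q-1}$ is a welcome detail that the paper leaves implicit.
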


\begin{proof}
Set \( x = x_1 + x_2\alpha \) and \( \gamma = a_1 + a_2\alpha \), with \( \mathrm{Tr}(\alpha) = 0 \) and \( \alpha^2 = u \). We have
\[\begin{split}
f(x)&=f(x_1 + x_2\alpha)\\
&=x_1 + x_2\alpha+(a_1 + a_2\alpha)\mathrm{Tr}\left((x_1 + x_2\alpha)^{q+3}-(x_1 + x_2\alpha)^{2q+2}\right)\\
&=x_1 + x_2\alpha+(a_1 + a_2\alpha)\mathrm{Tr}\left((x_1 - x_2\alpha)(x_1 + x_2\alpha)^3-(x_1 - x_2\alpha)^{2}(x_1 + x_2\alpha)^2\right)\\
&=x_1 + x_2\alpha+(a_1 + a_2\alpha)\mathrm{Tr}\left(2x_1^3x_2\alpha+2x_1^2x_2^2u-2x_1x_2^3u\alpha-2x_2^4u^2\right)\\
&=x_1 + x_2\alpha+(a_1 + a_2\alpha)\left(4x_1^2x_2^2u-4x_2^4u^2\right)\\
&=(x_1+4a_1x_1^2x_2^2u-4a_1x_2^4u^2)+(x_2+4a_2x_1^2x_2^2u-4a_2x_2^4u^2)\alpha.
\end{split}\]
Let
\[f_1(x_1,x_2)=x_1+4a_1x_1^2x_2^2u-4a_1x_2^4u^2,\quad f_2(x_1,x_2)=x_2+4a_2x_1^2x_2^2u-4a_2x_2^4u^2.\]
Take $t_1=\frac{q-1}{2},t_2=0$, we have
\[\begin{split}
f^{t_1}_1(x_1,x_2)f^{t_2}_2(x_1,x_2)&=f^{\frac{q - 1}{2}}_1(x_1,x_2)f^{0}_2(x_1,x_2)\\
&=(x_1+4a_1x_1^2x_2^2u-4a_1x_2^4u^2)^{\frac{q - 1}{2}}\\
&=4^{\frac{q - 1}{2}}a_1^{\frac{q - 1}{2}}u^{\frac{q - 1}{2}}x_1^{q - 1}x_2^{q - 1}+\text{the terms without}~x_1^{q - 1}x_2^{q - 1}.
\end{split}\]
Thus, if \( f(x)\) is a permutation polynomial, then it must hold that \( a_1 = 0 \). Furthermore,
\[\begin{split}
f^{0}_1(x_1,x_2)f^{\frac{q - 1}{2}}_2(x_1,x_2)&=(x_2+4a_2x_1^2x_2^2u-4a_2x_2^4u^2)^{\frac{q - 1}{2}}\\
&=4^{\frac{q - 1}{2}}a_2^{\frac{q - 1}{2}}u^{\frac{q - 1}{2}}x_1^{q - 1}x_2^{q - 1}+\text{the terms without}~x_1^{q - 1}x_2^{q - 1}.
\end{split}\]
Hence, we get \(a_2=0\). Therefore, the polynomial \(f(x)=x+\gamma \mathrm{Tr}\left(x^{q+3}-x^{2q+2}\right)\) over \(\mathbb{F}_{q^2}\) is a permutation polynomial if and only if \(\gamma = 0\).
\qed\end{proof}

\begin{corollary}
Let \(q\) be an odd prime power. The polynomial \(f(x)=x+\gamma \mathrm{Tr}\left(x^{q+3}+x^{2q+2}\right)\) over \(\mathbb{F}_{q^2}\) is a permutation polynomial if and only if \(\gamma=0\).
\end{corollary}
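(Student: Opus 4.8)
The plan is to reduce this ``$+$'' case to the ``$-$'' case already settled in Theorem 3.8, using the same substitution device employed in the proofs of the preceding corollaries. Write $x=\alpha t$ with $t$ ranging over $\mathbb{F}_{q^2}$, where $\alpha^2=u$ and $u$ is the quadratic non-residue fixed at the start of the section. Since $t\mapsto\alpha t$ and $y\mapsto\alpha^{-1}y$ are bijections of $\mathbb{F}_{q^2}$, the polynomial $f(x)=x+\gamma\mathrm{Tr}(x^{q+3}+x^{2q+2})$ permutes $\mathbb{F}_{q^2}$ if and only if $g(t):=\alpha^{-1}f(\alpha t)$ does.

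The computation rests on $\alpha^q=-\alpha$ (equivalently $\mathrm{Tr}(\alpha)=0$), which gives $\alpha^{q+3}=\alpha^q\alpha^3=-\alpha^4=-u^2$ and $\alpha^{2q+2}=(\alpha^2)^q\alpha^2=u^2$. Hence
\[(\alpha t)^{q+3}+(\alpha t)^{2q+2}=-u^2t^{q+3}+u^2t^{2q+2}=-u^2\bigl(t^{q+3}-t^{2q+2}\bigr),\]
and since $u^2\in\mathbb{F}_q$ it may be pulled out of the trace. Using $\alpha^{-1}=\alpha/u$, this yields
\[g(t)=t-\alpha^{-1}\gamma u^2\,\mathrm{Tr}\bigl(t^{q+3}-t^{2q+2}\bigr)=t+(-\alpha u\gamma)\,\mathrm{Tr}\bigl(t^{q+3}-t^{2q+2}\bigr).\]

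Finally, I apply Theorem 3.8 with its coefficient $\gamma$ replaced by $-\alpha u\gamma$: that theorem asserts $g$ permutes $\mathbb{F}_{q^2}$ if and only if $-\alpha u\gamma=0$. Since $\alpha\neq0$ and $u\neq0$, this is equivalent to $\gamma=0$; conversely $\gamma=0$ gives the identity map, which is trivially a permutation. There is no genuine obstacle here; the only points requiring care are the exponent bookkeeping for $\alpha^{q+3}$ and $\alpha^{2q+2}$, the extraction of the factor $\alpha^{-1}=\alpha/u$ in front, and the observation that $u^2\in\mathbb{F}_q$ commutes with $\mathrm{Tr}$ — exactly the steps carried out in the earlier ``$+$''-version corollaries.
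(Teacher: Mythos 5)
Your argument is correct and is essentially identical to the paper's proof: the same substitution $x=\alpha t$, the same identities $\alpha^{q+3}=-u^2$ and $\alpha^{2q+2}=u^2$, and the same reduction to Theorem 3.8 with coefficient $-\alpha u\gamma$. No issues.
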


\begin{proof}
Let $x=\alpha t$, $t\in\mathbb{F}_{q^2}$, then we have
\[f(x)=\alpha t+\gamma \mathrm{Tr}\left((\alpha t)^{q+3}+(\alpha t)^{2q+2}\right)=\alpha \left(t-\alpha u\gamma \mathrm{Tr}\left(t^{q+3}-t^{2q+2}\right)\right).\]
Therefore, by Theorem 3.8, \(f(x)=x+\gamma \mathrm{Tr}\left(x^{q+3}+x^{2q+2}\right)\) over \(\mathbb{F}_{q^2}\) is a permutation polynomial if and only if \(-\alpha u\gamma=0\)., i.e., \(\gamma=0\).
\end{proof}

\section{Conclusion}
In this paper, we focused on the set
\[P_H=\{\gamma\in \mathbb{F}_{q^n} : x+\gamma \mathrm{Tr}(H(x))~\text{is a permutation polynomial}\}.\]
On the one hand, we characterized the structure of the set \(P_H\) and revealed its connection with the direction set \(D_{\mathrm{Tr}(H(x))}\). On the other hand, in Theorems 2.4 and 2.5, we established the relationship between the set \(P_H\) and the linear translators of the function \(\mathrm{Tr}(H(x))\).

Additionally, in Theorem 2.8, we derived an effective upper bound for the cardinality \(|P_H|\) of the set \(P_H\), and proved that when \(|P_H|\) reached the maximum value, the function \(\mathrm{Tr}(H(x))\) must be an \(\mathbb{F}_q\)-linear function. It was further shown that the value of \(|P_H|\) did not fall within the interval \(\left[\frac{q^n + 1}{2}, q^n - q^{n-1}\right)\). This naturally led to a new question: what is the second largest value of \(|P_H|\)? We also constructed a function \(\mathrm{Tr}(H(x))\) that takes values uniformly over \(\mathbb{F}_q\), for which the corresponding set \(P_H\) contains only trivial cases. Finally, for two classes of functions \(H(x)\), we discussed the necessary and sufficient conditions for \(x + \gamma \mathrm{Tr}(H(x))\) to be a permutation polynomial over \(\mathbb{F}_{q^2}\), and determined the corresponding set \(P_H\) by using Hermite's Criterion.


\begin{thebibliography}{}

\bibitem{Ball} S. Ball, \emph{The number of directions determined by a function over a finite field}, J. Combin. Theory Ser. A 104 (2003), 341--350.

\bibitem{Blokhuis-Ball-Brouwer-Storme-Szonyi} A. Blokhuis, S. Ball, A. E. Brouwer, L. Storme, and T. Sz\H{o}nyi, \emph{On the number of slopes of the graph of a function defined on a finite field}, J. Combin. Theory Ser. A 86 (1999), 187--196.

\bibitem{Blokhuis-Brouwer-Szonyi} A. Blokhuis, A. E. Brouwer and T. Sz\H{o}nyi, \emph{The number of directions determined by a function $f$ on a finite field}, J. Combin. Theory Ser. A 70 (1995), 349--353.

\bibitem{Charpin-Kyureghyan0} P. Charpin and G. Kyureghyan, \emph{On a class of permutation polynomials over $\mathbb{F}_{2^n}$}, in: SETA 2008, in: Lecture Notes in Comput. Sci., vol. 5203, Springer-Verlag, 2008, pp. 368--376.

\bibitem{Charpin-Kyureghyan1} P. Charpin and G. Kyureghyan, \emph{When does $G(x)+\gamma \mathrm{Tr}(H(x))$ permute $\mathbb{F}_{p^n}$?}, Finite Fields Appl. 15 (2009), 615--632.

\bibitem{Charpin-Kyureghyan2} P. Charpin and G. Kyureghyan, \emph{Monomial functions with linear structure and permutation polynomials}, Contemp. Math. 518 (2010), 99--111.

\bibitem{Charpin-Kyureghyan-Suder} P. Charpin, G. Kyureghyan and V. Suder, \emph{Sparse permutations with low differential uniformity}, Finite Fields Appl. 28 (2014), 214--243.

\bibitem{Ding} C. Ding, \emph{Cyclic codes from some monomials and trinomials}, SIAM J. Discrete Math. 27 (2013), 1977--1994.

\bibitem{Ding-Yuan} C. Ding and J. Yuan, \emph{A family of skew Hadamard difference sets}, J. Comb. Theory, Ser. A 113 (2006), 1526--1535.

\bibitem{Ding-Zhou} C. Ding and Z. Zhou, \emph{Binary cyclic codes from explicit polynomials over $GF(2^m)$}, Discrete Math. 321 (2014), 76--89.

\bibitem{Evans-Greene-Niederreiter} R. J. Evans, J. Greene and H. Niederreiter, \emph{Linearized polynomials and permutation polynomials of finite fields}, Michigan Math. J. 39 (1992), 405--413.

\bibitem{Gacs-Lovasz-Szonyi} A. G{\'a}cs, L. Lov{\'a}sz and T. Sz\H{o}nyi, \emph{On a generalization of R{\'e}dei's theorem}, Combinatorica 23 (2003), 585--598.

\bibitem{Jiang-Yuan-Li-Qu} S. Jiang, M. Yuan, K. Li and L. Qu, \emph{New constructions of permutation polynomials of the form \(x+\gamma\mathrm{Tr}^{q^2}_q(h(x))\) over finite fields with even characteristic}, Finite Fields Appl. 101 (2025), 102522.

\bibitem{Kyureghyan} G. Kyureghyan, \emph{Constructing permutations of finite fields via linear translators}, J. Combin. Theory Ser. A 118(3) (2011), 1052--1061.

\bibitem{Kyureghyan-Zieve} G. Kyureghyan and M. Zieve, \emph{Permutation polynomials of the form $X+\gamma\mathrm{Tr}(X^k)$}, Contemporary Developments in Finite Fields and Applications, World Scientific, (2016), 178--194.

\bibitem{Lai} X. Lai, \emph{Additive and linear structures of cryptographic functions}, in: Proc. of FSE, in: Lecture Notes in Comput. Sci., 1008 (1995), 75--85.

\bibitem{Laigle-Chapuy} Y. Laigle-Chapuy, \emph{Permutation polynomials and applications to coding theory}, Finite Fields Appl. 13 (2007), 58--70.

\bibitem{Li-Qu-Chen-Li} K. Li, L. Qu, X. Chen and C. Li \emph{Permutation polynomials of the form $cx + \mathrm{Tr}_{q^l/q}(x^a)$ and permutation trinomials over finite fields with even characteristic}, Cryptogr. Commun. 10 (2018), 531--554.

\bibitem{Li-Qu-Wang} K. Li, L. Qu and Q. Wang \emph{Compositional inverses of permutation polynomials of the form $x^rh(x^s)$ over finite fields}, Cryptogr. Commun. 11 (2019), 279--298.

\bibitem{Lidl-Niederreiter1} R. Lidl and H. Niederreiter, \emph{Finite Fields}, Encyclopedia Math. Appl., vol. 20, Addison-Wesley, 1983.

\bibitem{Lidl-Niederreiter2} R. Lidl and H. Niederreiter, \emph{Introduction to Finite Fields and their Applications}, Cambridge: Cambridge University Press, 1986.

\bibitem{Lovasz-Schrijver} L. Lov{\'a}sz and A. Schrijver \emph{Remarks on a theorem of R{\'e}dei}, Studia Scient. Math. Hungar. 16 (1981), 449--454.

\bibitem{Mullen} G. L. Mullen, \emph{Permutation polynomials over finite fields. Proceedings of the Conference on Finite Fields and Their Applications}, Lecture Notes in Pure and Applied Mathematics, Vol. 141. Marcel Dekker, New York, (1993), 131--151.

\bibitem{Niederreiter-Robinson} H. Niederreiter and K.H. Robinson, \emph{Complete mappings of finite fields}, J. Aust. Math. Soc. Ser. A 33 (1982), 197--212.

\bibitem{Polverino-Szonyi-Weiner} O. Polverino, T. Sz\H{o}nyi and Z. Weiner, \emph{Blocking sets in Galois planes of square order}, Acta Sci. Math. (Szeged) 65 (1999), 773--784.

\bibitem{Redei} L. R{\'e}dei, \emph{L{\"u}ckenhafte Polynome {\"u}ber endlichen K{\"o}rpen}, Birkh{\"a}user-Verlag, Basel, 1970. (English translation: Lacunary Polynomials over finite fields, North-Holland, Amsterdam, 1973.)

\bibitem{Rivest-Shamir-Adelman} R. L. Rivest, A. Shamir and L. M. Adelman, \emph{A method for obtaining digital signatures and public-key cryptosystems}, Commun. ACM 21 (1978), 120--126.

\bibitem{Schwenk-Huber} J. Schwenk and K. Huber, \emph{Public key encryption and digital signatures based on permutation polynomials}, Electron. Lett. 34 (1998), 759--760.

\bibitem{Tuxanidy-Wang} A. Tuxanidy and Q. Wang, \emph{Compositional inverses and complete mappings over finite fields}, Discrete Appl. Math. 217 (2017), 318--329.

\bibitem{Wang-Zha-Du-Zheng} Y. Wang, Z. Zha, X. Du and D. Zheng, \emph{Several classes of permutation polynomials with trace functions over $\mathbb{F}_{q^n}$}, Appl. Algebr. Eng. Comm. 35 (2024), 337--349.

\bibitem{Wu-Yuan} D. Wu and P. Yuan, \emph{Permutation polynomials and their compositional inverses over finite fields by a local method}, Des. Codes Cryptogr. 92(2) (2024), 267--276.

\bibitem{Yuan1} P. Yuan, \emph{Compositional inverses of AGW-PPs}, Adv. Math. Commun. 16(4) (2022), 1185--1195.

\bibitem{Yuan2} P. Yuan, \emph{Local method for compositional inverses of permutation polynomials}, Commun. Algebra 52(7) (2022), 3070--3080.

\bibitem{Yuan3} P. Yuan, \emph{Algebraic structure of permutational polynomials over $\mathbb{F}_{q^n}$}, Commun. Algebra (2025), 1--10, https://doi.org/10.1080/00927872.2025.2476727.

\bibitem{Yuan-Ding1} P. Yuan and C. Ding, \emph{Permutation polynomials over finite fields from a powerful lemma}, Finite Fields Appl. 17 (2011), 560--574.

\bibitem{Yuan-Ding2} P. Yuan and C. Ding, \emph{Further results on permutation polynomials over finite fields}, Finite Fields Appl. 27 (2014), 88--103.

\bibitem{Yuan-Zeng} P. Yuan and X. Zeng, \emph{A note on linear permutation polynomials}, Finite Fields Appl. 17 (2011), 488--491.

\bibitem{Zha-Hu-Zhang} Z. Zha, L. Hu and Z. Zhang \emph{Permutation polynomials of the form $x +\gamma \mathrm{Tr}^{q^n}_q(h(x))$}, Finite Fields Appl. 60 (2019), 101573.

\end{thebibliography}
\end{document}